\documentclass[hidelinks,onefignum,onetabnum]{siamart250211}

\date{}

\usepackage{amsmath}
\usepackage{amssymb}
\usepackage{amsfonts}
\usepackage{graphicx}
\usepackage{enumerate}
\usepackage{bm}
\usepackage{hyperref}
\hypersetup{
	colorlinks=true,
	linkcolor=blue,
	filecolor=magenta,      
	urlcolor=blue,
        citecolor=blue
}

\usepackage{xcolor}

\usepackage[sort&compress]{natbib}

\usepackage{listings}
\usepackage{url}
\usepackage[utf8]{inputenc}

\usepackage{color}
\usepackage{courier}
\usepackage{upquote}
\usepackage{framed}
\usepackage{listings}

\usepackage{lipsum}
\usepackage{epstopdf}
\usepackage{algorithmic}
\ifpdf
  \DeclareGraphicsExtensions{.eps,.pdf,.png,.jpg}
\else
  \DeclareGraphicsExtensions{.eps}
\fi

\newsiamremark{remark}{Remark}
\newsiamremark{hypothesis}{Hypothesis}
\crefname{hypothesis}{Hypothesis}{Hypotheses}
\newsiamthm{claim}{Claim}
\newsiamremark{fact}{Fact}
\crefname{fact}{Fact}{Facts}

\headers{SBP operators for general function spaces: Optimal nodes}{N. Hale, C. Harley, P. Nchupang, and J. Nordstr\"om}

\title{Summation-by-parts operators for general\\ function spaces: Optimal nodes}

\author{Nicholas Hale\thanks{Department of Mathematical Sciences, Stellenbosch University, Stellenbosch, South Africa, 7600.}
\and Charis Harley\thanks{Department of Electrical and Electronic Engineering Science, University of Johannesburg, Auckland Park 2006, Johannesburg, South Africa.}
\and Prince Nchupang\footnotemark[1]
\and Jan Nordstr\"om$^\#$\thanks{Department of Mathematics, Link\"{o}ping University, SE-581 83 Link\"{o}ping, Sweden\endgraf\hspace*{0.2pt}$^\#$Department of Mathematics and Applied Mathematics, University of Johannesburg, Auckland Park 2006, Johannesburg, South Africa}}

\usepackage{amsopn}

\ifpdf
\hypersetup{
  pdftitle={Summation-by-parts operators for general function spaces: Optimal nodes},
  pdfauthor={N. Hale, C. Harley, P. Nchupang, and J. Nordstr\"om}
}
\fi

\newcommand{\be}{\begin{equation}}
\newcommand{\ee}{\end{equation}}

\usepackage{placeins}
\begin{document}
\maketitle
%%%%%%%%%%%%%%%%%%%%%%%% ABSTRACT %%%%%%%%%%%%%%%%%%%%%%%%%
\begin{abstract}
Gauss--Lobatto quadrature nodes and weights are optimal for closed summation-by-parts (SBP) formulations based on polynomial approximation spaces in the sense that for a prescribed function space they yield an SBP operator of minimal dimension. We show that the same principle extends to general (possibly non-polynomial) function spaces: an associated generalised Gauss--Lobatto quadrature provides the optimal nodes and weights for the SBP formulation. We present an algorithm for computing these quadrature rules, demonstrate their accuracy and efficiency across a range of function spaces, and illustrate their use in solving initial boundary value problems.
\end{abstract}

%%%%%%%%%%%%%%%%%%%%%%%% ARTICLE %%%%%%%%%%%%%%%%%%%%%%%%%%

% REQUIRED
\begin{keywords}
Summation-by-parts, generalised Gauss, Gauss quadrature, Gauss--Lobatto
\end{keywords}

% REQUIRED
\begin{MSCcodes}
	65M12, %  	Stability and convergence of numerical methods for initial value and initial-boundary value problems involving PDEs
	65M70, %  	Spectral, collocation and related methods for initial value and initial-boundary value problems involving PDEs
    65D32 %  	Numerical quadrature and cubature formulas
\end{MSCcodes}

\section{Introduction}\label{sec:intro}
Summation-by-parts (SBP) formulations~\cite{DELREYFERNANDEZ2014171,Svärd201417} combined with weak imposition of boundary conditions ensure semi-discrete stability for well-posed linear initial boundary value problems (IBVPs)~\cite{Kreiss1970277,Gustafsson95,Kreiss04}. The SBP framework emulates the principle of continuous integration-by-parts (IBP) at the discrete level, which is the first crucial step in deriving energy estimates and energy stability for semi-discretized IBVPs~\cite{Nordström2017365}. The second step is to impose boundary conditions weakly, either via the simultaneous approximation term (SAT) method~\cite{Carpenter1994220,Carpenter1999341} or as numerical fluxes~\cite{gassner2013skew,kopriva2021,Kopriva2017314}.

Originally developed in 1974 by Kreiss and Sherer~\cite{KreissSherer}, the SBP technique was rediscovered in the mid 90's~\cite{strand1994,Olsson19951035,Olsson19951473}. This development provided a systematic way to achieve semi-discrete stability for high-order finite difference approximations. The SBP formulation has subsequently been found in (or extended to) almost all existing spatial approximation methods, e.g., in the finite volume \cite{nordstrom2012weak,nordstrom2003finite}, spectral element \cite{carpenter2014entropy,carpenter1996spectral}, flux reconstruction \cite{castonguay2013energy,huynh2007flux}, discontinuous Galerkin \cite{gassner2013skew,hesthaven1996stable,kopriva2021}, and continuous Galerkin schemes \cite{abgrall2020analysis,abgrall2021analysis}.
The SBP-SAT framework has also been extended to the temporal domain~\cite{nordstrom2013,Lundquist201486,Nordström2016A1561}, which allows for fully discrete stable and accurate schemes, even for non-linear IBVPs~\cite{Lundquist16,nordstrom2019}. 

Traditionally, SBP operators
are constructed to be exact for polynomials up to a certain degree. 
However, for some IBVPs, polynomials might not be the best choice. Instead, other approximation spaces could be more suitable, particularly when some a priori knowledge of the solution is available. Glaubitz \textit{et al.}~\cite{glaubitz2023summation} show both how to construct SBP operators for general function spaces $\mathcal{F}$, denoted FSBP operators, and demonstrate that these can significantly improve convergence rates in various applications. The FSBP technique has since been extended to approximate problems with second derivatives~\cite{Glaubitz20242nd}, multi-dimensional problems~\cite{Glaubitz2023multi-d}, and used to improve stability for radial basis function approximations~\cite{Glaubitz2024}. 

%\begin{figure}[ht!]
%     \centering
%         \includegraphics[width=.49\textwidth]{sol_2nd.png}
%         \includegraphics[width=.49\textwidth]{norm_2nd.png}
 %   \caption{(Left) Analytical and numerical solutions of an oscillatory linear-advection problem using 2nd finite difference and trigonometric-based SBP operators.~For the trigonometric bases case,~we used both optimal and equispaced nodes.~(Right) Convergence as the number of subdomains is increased. The continuous and discrete analysis are presented in Section \ref{subsection:contprob_1}}.
%     \label{fig1_triq}
%\end{figure}

In~\cite[Corollary 4.6]{glaubitz2023summation}, Glaubitz \textit{et al}.\ provide a necessary and sufficient condition for the existence of a {\em diagonal-norm} FSBP operator,  namely the existence of a positive quadrature formula, exact for $\mathcal{G} = (\mathcal{F}\mathcal{F})' = \{(fg)':f,g\in\mathcal{F}\}$ (see Theorem~\ref{thm:G2024} below).
For a given function space $\mathcal{F}$, such a quadrature formula can (under some mild restrictions on the elements of $\mathcal{F}$) be obtained by taking sufficiently many grid points and solving a constrained least-squares problem~\cite{glaubitz2024optimization}. 
However, to make the SBP method efficient, one seeks to minimize the number of required grid points and still satisfy this property. This is the focus of the present manuscript, where we demonstrate that when $\mathcal{G} = (\mathcal{FF})'$ forms a {\em Tchebyshev system} then a {\em generalised Gauss--Lobatto quadrature} (GGLQ) rule on $\mathcal{G}$ gives the optimal nodes and weights; see Section~\ref{sec:prelims} for definitions.
%\footnote{Note that Gautcshi~\cite{gautschi04} introduces ``GGLQs'' in a different context. There, the rule is still Gaussian with respect to polynomial spaces, but may involve evaluating higher-order derivatives of the integrand at the endpoints of the interval. Here, the quadratures are generalised in the sense that they apply to non-polynomial function spaces, as per~\cite[\S 2.3.3]{gautschi81}.}

As an example,~in Figure~\ref{fig1_triq} we present the analytical and numerical solutions of an oscillatory linear-advection problem solved using 4th-order polynomial and trigonometric-based SBP operators (see Section \ref{subsection:contprob_1} for details). The right panel of Figure~\ref{fig1_triq} shows that with equally-spaced discretisation points the trigonometric-based SBP scheme is slightly more accurate than the polynomial-based one. However, we also see that a more significant improvement can be made by an optimal choice of nodes, while still maintaining the stability properties guaranteed by the SBP-SAT framework. Finding these otimal nodes is the subject of this manuscript. %The details of the continuous and discrete analysis for the IBVP governing the solution in Figure \ref{fig1_triq} are presented in Section \ref{subsection:contprob_1}.

%\begin{figure}[t!]
%     \centering
 %        \includegraphics[width=.49\textwidth]{sol_4th.png}
 %        \includegraphics[width=.49\textwidth]{norm_4th.png}
 %   \caption{Left: Analytical and numerical solutions of an oscillatory linear-advection problem using 4th-order polynomial and trigonometric-based SBP operators (see Section~\ref{subsection:contprob_1} for details). For the polynomnial basis equally-spaced points are used, while both equispaced and optimal nodes are used for the trigonometric basis.~Right: Convergence as the number of sub-domains (and hence total discretisation size, $N$) in the discontinuous Galerkin discretisation is increased. We see that while solution space tailored for the solution gives improved accuracy, a suitable space combined with optimal nodes is better still.}.
  %   \label{fig1_triq}
%\end{figure}
%\FloatBarrier
\begin{figure}
     \centering
\includegraphics[width=.48\textwidth]{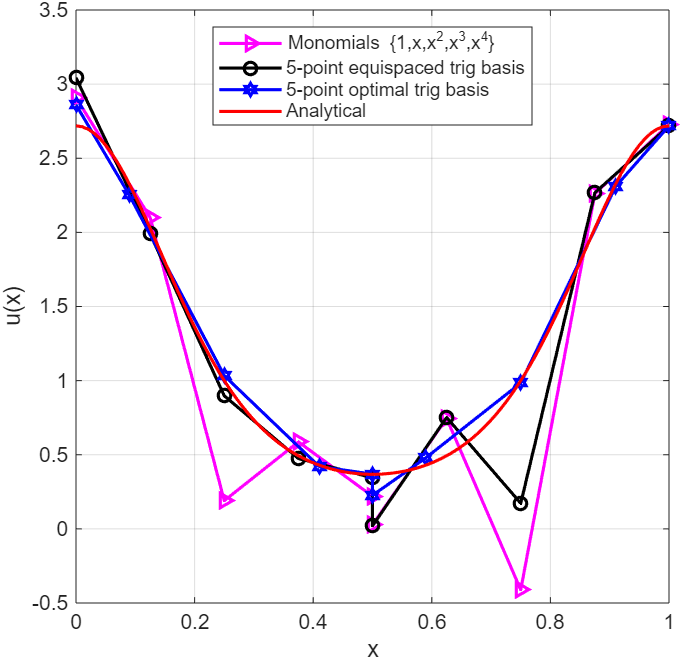}
\includegraphics[width=.49\textwidth]{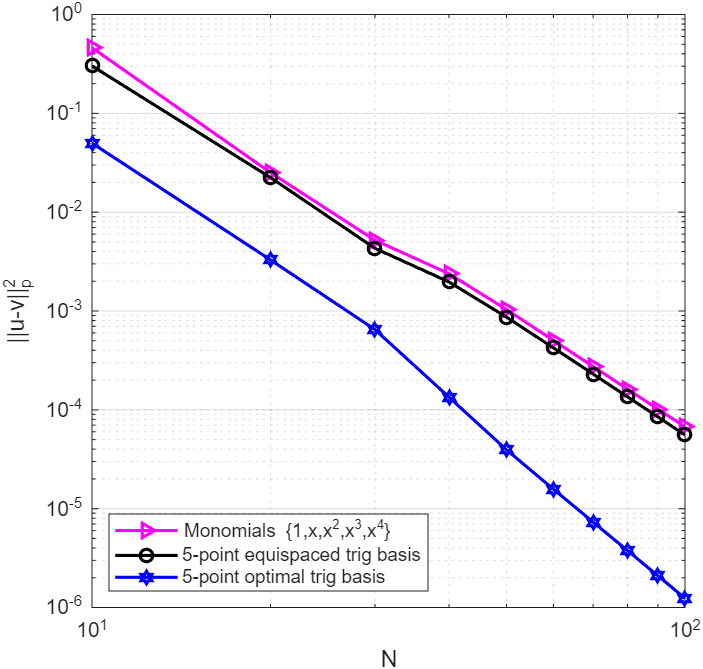}
    \caption{Left: Analytical and numerical solutions of an oscillatory linear-advection problem using 4th-order polynomial and trigonometric-based SBP operators %(see Section~\ref{subsection:contprob_1} for details). 
    For the polynomnial basis equally-spaced points are used, while both equispaced and optimal nodes are used for the trigonometric basis.~Right: Convergence as the number of sub-domains (and hence total discretisation size, $N$) in the discontinuous Galerkin discretisation is increased. We see that while a solution space tailored for the solution gives improved accuracy, a suitable space combined with optimal nodes is better still.}
     \label{fig1_triq}
\end{figure}
\FloatBarrier

This manuscript makes three contributions.
First, building on the existence characterisation of diagonal-norm FSBP operators in~\cite{glaubitz2023summation}, we connect the problem of finding \emph{optimal} nodes (in the sense of minimal operator dimension) to the computation of generalised Gauss and Gauss--Lobatto quadrature rules for the function space $\mathcal{G}=(\mathcal{F}\mathcal{F})'$.
Second, when $\mathcal{G}$ forms a Tchebyshev system, we show that the associated generalised Gauss quadrature (GGQ) and generalised Gauss--Lobatto quadrature (GGLQ) rules provide optimal nodes and positive weights for (open and closed) diagonal-norm FSBP operators.
Third, we present a novel algorithm for computing GGLQ rules based on those of Ma \emph{et al.}~\cite{ma1996generalized} and Yarvin--Rokhlin~\cite{yarvin1998} for computing GGQ rules, and we demonstrate the resulting efficiency gains in multi-domain SBP discretisations of representative IBVPs.

The remainder of the manuscript is organised as follows. 
Section~\ref{sec:prelims} collects the definitions and background needed to connect diagonal-norm FSBP operators to positive quadrature rules.
Section~\ref{sec:GGLQ} develops the GGQ/GGLQ framework for general function spaces, including existence/uniqueness results under Tchebyshev-system assumptions and algorithms for computing the corresponding nodes and weights.
Section~\ref{sec:applics} demonstrates the impact of these optimised operators in multi-domain discretisations of two model IBVPs.
We conclude in Section~\ref{sec:conc}.

\section{Preliminaries}\label{sec:prelims}
\subsection{FSBP operators}\label{sec:def-SBP}
Following Glaubitz {\em et al.}~\cite{glaubitz2023summation}, we define an $\mathcal{F}$-based summation-by-parts (FSBP) operator as follows.
\begin{definition}[FSBP operators]\label{def:2_1}
Let $\mathcal{F} \subset C^1([a,b])$ be a finite dimensional function space {and $\bm{x} = [x_1,\ldots,x_n]^\top \subset [a,b]$ be a set of $n$ distinct nodes}.\footnote{For the remainder of the paper, we assume that all nodes are ordered, distinct, and contained within $[a,b]$, i.e.,  $a\le x_1 < x_2 < \ldots < x_n \le b$.} An operator $D = P^{-1}Q$ is an {\em $\mathcal{F}$-based summation-by-parts (FSBP) operator} if
\begin{enumerate}[i.]\setlength{\itemsep}{-0.25em}
    \item $Df(\textbf{x}) = f'(\textbf{x})$ for all $f \in \mathcal{F}$,
    \item $P$ is a symmetric positive definite matrix,~and
    \item $Q+Q^T = B = \text{diag}(-1,0,\hdots,1).$
\end{enumerate}
An FSBP operator has a {\em diagonal norm} if the matrix $P$ is diagonal and it is {\em closed} if $a, b\in\bm{x}$. 
\end{definition}
\begin{definition}[Optimal FSBP operator]
For a given function space, $\mathcal{F} \subset C^1([a,b])$, an $n\times n$ FSBP operator is {\em optimal} if there is no FSBP operator of dimension less than $n$. {\em Optimal nodes} for the function space are nodes $\bm{x}$ corresponding to an optimal FSBP operator.
\begin{remark}
Note that, for a given function space, the optimal nodes, and hence the optimal FSBP operator, need not be unique---see Section~\ref{subsection:examples}. 
\end{remark}
\end{definition}
\begin{remark}
    Note that if $\mathcal{F} = \mathbb{P}_d$, the space of polynomials of degree $\le d$, then Definition 2.1 recovers the classical SBP framework. In this sense, FSBP operators extend SBP methods from polynomial approximation to general finite-dimensional function spaces. Note also that the optimal nodes for closed polynomial-based SBP operators are known to be Gauss--Lobatto quadrature nodes; see the discussion in Section~\ref{subsec:gqns} below.
\end{remark}
Consider two functions $\phi,\psi\in C^{1}[a,b]$ 
%such that $\phi\psi_x$ and $\phi_x\psi$ are integrable on $[a,b]$  
and denote by $\bm{\phi} = \phi(\bm{x})$ and $\bm{\psi} = \psi(\bm{x})$ vectors of these functions evaluated at the nodes $\bm{x}$. From Definition~\ref{def:2_1} we have that 
    \begin{equation}\label{eqn:discreteIBP}
 \bm{\phi}P(D\bm{\psi}) = \bm{\phi}^\top Q \bm{\psi} = \bm{\phi}^\top \left(B-Q^T\right)\bm{\psi} = 
 \bm{\phi}\bm{\psi}\big|_{1}^{n} - (D\bm{\phi})^\top P \bm{\psi}.
\end{equation}
Thus, we see that an FSBP operator discretely mimics the IBP identity, namely, 
\begin{equation}\label{eqn:ctsIBP}
    \int_a^b \phi \psi_x\,dx = (\phi \psi)\big|^b_a  - \int_a^b \phi_x \psi\,dx.
\end{equation}
As mentioned in Section~\ref{sec:intro}, this {\em discrete IBP} or {\em summation-by-parts} (SBP) property is crucial for deriving energy stable semi-discretized IBVPs~\cite{Nordström2017365}.

Comparing~(\ref{eqn:discreteIBP}) and~(\ref{eqn:ctsIBP}), we can see that the matrix $P$ is the discrete analogue of the definite integral operator on $[a,b]$. For a diagonal norm FSBP, the diagonal entries can therefore be interpreted as weights in a positive $n$-point quadrature rule with $P = \text{diag}(\bm{w})$.
\begin{definition}[Quadrature rule]\label{def:quadrule}
An {\em $n$-point quadrature rule} takes the form
\[
    I_{\bm{x},\bm{w}}[g] = \sum_{k=1}^nw_kg(x_k),
\]
where $\bm{x} = [x_1,\ldots, x_n]^\top$ are {\em nodes} and $\bm{w} = [w_1,\ldots,w_n]^\top$ are {\em weights}. A quadrature rule $I_{\bm{x},\bm{w}}$ is said to be {\em ${\mathcal{G}}$-exact} with respect to the {\em weight function} $\omega(x)$ if it integrates each element of $\mathcal{G}$ exactly, i.e., 
\[
\int_{a}^b g(x)\omega(x)\, dx = I_{\bm{x},\bm{w}}, \quad \forall g\in{\mathcal{G}}.
\]
A quadrature rule $I_{\bm{x},\bm{w}}$ is {\em positive} if $w_k > 0$ for all $k = 1, \ldots, n$ and {\em closed} if $x_1 = a$ and $x_n = b$. 
\end{definition}%

The following result
%, due to Glaubitz {\em et al.}, 
gives a necessary and sufficient condition for the existence of an FSBP operator.
\begin{theorem}{\cite[Corollary 4.6]{glaubitz2023summation}}\label{thm:G2024}
Let $\mathcal{F}\subset C^{1}[a,b]$ be a finite-dimensional function space that forms linearly independent vectors when evaluated at the node points. Then there exists an $\mathcal{F}$-based SBP operator $D = P^{-1}Q$ with a
positive definite diagonal-norm matrix $P=\text{diag}(\bm{w})$ if and only if there exists a positive $(\mathcal{FF)'}$-exact 
 quadrature rule $I_{\bm{x},\bm{w}}$. %Here $(\mathcal{FF})' ={(fg)':f,g\in\mathcal{F}\}.$
\end{theorem}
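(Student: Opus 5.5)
The argument splits into the two directions of the equivalence. Throughout, $P=\operatorname{diag}(\bm{w})$ with $w_k>0$, $B=\operatorname{diag}(-1,0,\ldots,0,1)$, and the boundary term is read as $fg|_a^b$; in the closed setting this equals $\bm{f}^\top B\bm{g}$ with $\bm{f}=f(\bm{x})$, $\bm{g}=g(\bm{x})$.

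\emph{Necessity.} Suppose $D=P^{-1}Q$ is an FSBP operator with $P=\operatorname{diag}(\bm{w})$. Take any $f,g\in\mathcal{F}$. By exactness, $D\bm{f}=\bm{f}'$ and $D\bm{g}=\bm{g}'$. The discrete IBP identity \eqref{eqn:discreteIBP} gives
\[
\sum_k w_k\bigl(f g'+f' g\bigr)(x_k)=\bm{f}^\top P D\bm{g}+(D\bm{f})^\top P\bm{g}=\bm{f}^\top(Q+Q^\top)\bm{g}=\bm{f}^\top B\bm{g}.
\]
The right-hand side is $fg|_a^b=\int_a^b (fg)'\,dx$. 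So the rule $I_{\bm{x},\bm{w}}$ integrates $(fg)'$ exactly for every pair $f,g$. Exactness is linear, so it extends to the span $(\mathcal{F}\mathcal{F})'$, and the rule is positive because $P$ is positive definite.

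\emph{Sufficiency.} Let $\bm{w}>0$ give a $(\mathcal{F}\mathcal{F})'$-exact rule and set $P=\operatorname{diag}(\bm{w})$. We look for $Q$ of the form $Q=\tfrac12 B+S$ with $S$ skew-symmetric, which makes condition (iii) automatic. Condition (i) then becomes $Q V=P V'$, where $V=[\bm{f}_1,\ldots,\bm{f}_m]$ collects a basis of $\mathcal{F}$ evaluated at the nodes (full column rank by hypothesis) and $V'$ collects their derivatives. Equivalently,
\[
SV=R,\qquad R:=PV'-\tfrac12 BV.
\]
We need a skew-symmetric solution of this linear system.

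The key lemma is that a skew-symmetric $S$ with $SV=R$ exists whenever $V$ has full column rank and $V^\top R$ is skew-symmetric. To verify the compatibility condition, compute
\[
V^\top R+R^\top V=V^\top P V'+V'^\top P V-V^\top B V.
\]
Its $(i,j)$ entry is $I_{\bm{x},\bm{w}}[(f_if_j)']-f_if_j|_a^b$, which vanishes by exactness. For the construction, take $V^+=(V^\top V)^{-1}V^\top$ and
\[
S=RV^+-(V^+)^\top R^\top+(V^+)^\top\bigl(V^\top R\bigr)V^+.
\]
This $S$ is skew-symmetric, since the middle factor $V^\top R$ is skew. Checking $SV=R$ uses $V^+V=I$ and $V^\top(V^+)^\top=I$: the second and third terms give $-(V^+)^\top R^\top V+(V^+)^\top V^\top R$, which vanishes because $R^\top V=-V^\top R$.

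I expect the main obstacle to be this explicit construction and its verification. A secondary subtlety is the boundary matrix when the nodes are not closed. There one would replace $B$ by $\bm{t}_b\bm{t}_b^\top-\bm{t}_a\bm{t}_a^\top$, with $\bm{t}_a,\bm{t}_b$ exact interpolation or extrapolation vectors on $\mathcal{F}$; the same argument goes through, and in the paper's closed setting $B$ is exactly as in Definition~\ref{def:2_1}.
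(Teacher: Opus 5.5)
Your route is the standard one and matches what the paper relies on. The paper quotes this result from Glaubitz et al.\ without proof. In its subsection on constructing FSBP operators it builds $Q=\tfrac12 B+S$ with a skew-symmetric $S$ solving $SF=PF_x-\tfrac12 BF$, which is exactly your reduction. Several parts of your argument are correct:
\begin{itemize}
\item the necessity direction;
\item the compatibility identity $V^\top R+R^\top V=0$;
\item your observation that the paper's fixed $B=\text{diag}(-1,0,\ldots,0,1)$ only encodes $fg|_a^b$ for closed nodes.
\end{itemize}

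\textbf{The gap is in the step you flagged as the crux: the explicit $S$ you write down does not satisfy $SV=R$.} With your formula,
\[
SV=R-(V^+)^\top R^\top V+(V^+)^\top V^\top R=R+2\,(V^+)^\top V^\top R .
\]
Substituting $R^\top V=-V^\top R$ makes the two correction terms add, not cancel; they would cancel only if $V^\top R$ were symmetric. Since $(V^+)^\top=V(V^\top V)^{-1}$ has full column rank, this equals $R$ only when $V^\top R=0$. That fails in general: if $1,x\in\mathcal{F}$, the corresponding entry of $V^\top R$ is $I_{\bm{x},\bm{w}}[1]-\tfrac12(b-a)=\tfrac12(b-a)\neq 0$.

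The repair is a single sign. Take
\[
S=RV^+-(V^+)^\top R^\top-(V^+)^\top\bigl(V^\top R\bigr)V^+ .
\]
This is still skew-symmetric, because the last term is skew whenever $V^\top R$ is. Now
\[
SV=R-(V^+)^\top\bigl(R^\top V+V^\top R\bigr)=R,
\]
which uses precisely the compatibility condition you derived from $(\mathcal{F}\mathcal{F})'$-exactness. With this correction your lemma and the sufficiency direction go through, and $Q=\tfrac12 B+S$, $D=P^{-1}Q$ satisfy all three conditions of the FSBP definition.
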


\noindent This result reduces the problem of constructing FSBP operators to the problem of finding positive quadrature rules exact for $(\mathcal{F}\mathcal{F})'$. Thus, our focus will be (i) identifying function spaces $\mathcal{F}$ for which such quadrature rules exist, and (ii) developing practical methods to compute them.

\subsection{Gauss quadrature and node selection}\label{subsec:gqns}
The following definitions generalise those of standard Gauss quadrature; see~\cite{cheng1999,gautschi81, huybrechs2022} and Section~\ref{sec:GGLQ}.

\begin{definition}[Gauss and Gauss--Lobatto]\label{def:gaussian}
An $m$-point quadrature rule $I_{\bm{x},\bm{w}}$ is called Gaussian if it is exact for $2n$ functions ${g_1,\dots,g_{2n}}$ with respect to a positive {\em weight function} $\omega(x)$, where $m=n$, i.e., 
\[
\int_{a}^{b} g_j(x) \omega(x)\, dx = I_{\bm{x},\bm{w}}[g_j], \qquad j = 1, \ldots, 2n.
\]
If a rule $I_{\bm{x},\bm{w}}$ is exact for the same $2n$ functions ${g_1,\dots,g_{2n}}$, again with respect to $\omega(x)$,
 but the quadrature rule is closed (i.e., includes $a$ and $b$) and $m = n+1$, then it is of {\em Gauss--Lobatto}-type.
\end{definition}%

In classical Gauss quadrature, one takes $\mathcal{G}={\mathbb{P}}_{2n-1}$, i.e., the space of polynomials of degree $2n-1$ or less. When $\omega(x) = 1$ and $[a,b] = [-1,1]$, this gives rise to Gauss--Legendre quadrature, where the nodes are given by roots of the degree $n$ Legendre polynomial and the weights are positive~\cite{gautschi81}. %\footnote{Other finite intervals can be handled via an affine transformation.} %It is well known that an $n$-point Gauss--Legendre quadrature (GLQ) rule has positive weights and is exact for polynomials of degree $2n-1$ or fewer~\cite{gautschi81}. 
Therefore, in the case of an SBP operator for a polynomial basis of degree $n$, $\mathcal{F} = {\mathbb{P}}_n$, for which $\mathcal{G} = (\mathcal{F}\mathcal{F})' = {\mathbb{P}}_{2n-1}$, an $n$-point Gauss--Legendre quadrature rule satisfies the conditions of Theorem~\ref{thm:G2024}. % \footnote{Any other finite interval can be handled by an affine change of variables.} 
However, in SBP-based methods, closed quadrature formulas are typically preferred, leading to the use of $n+1$ point Gauss--Legendre--Lobatto quadrature formulas~\cite{gassner2013skew,kopriva2021,Kopriva2017314}. An $n+1$ point Gauss--Legendre--Lobatto quadrature is positive and exact for polynomials of degree $2n-1$~\cite[Section 2.1]{gautschi81}, therefore satisfying the conditions of Theorem~\ref{thm:G2024}. 
By contrast, Newton--Cotes rules on equally spaced nodes can suffer from negative weights (e.g., for $n=9$ or $n\ge 11$~\cite{berstein1937formules}), violating condition {\em ii.}\ of Definition~\ref{def:2_1}. Even if the weights are positive, an equally spaced rule will typically require $2n$ points to exactly integrate a polynomial of degree $2n-1$ (as fixing the $2n$ nodes leaves only the $2n$ degrees of freedom in the weights). For large $n$, this discrepancy---$2n$ points for uniform nodes versus $n+1$ points for optimal nodes---can be considerable, making Gauss--Legendre--Lobatto-based SBP formulations significantly more efficient and, in fact, optimal.

In the next Section we discuss how these ideas can be extended to non-polynomial function spaces and how the associated {\em generalised Gauss quadrature rules} (GGQs) can be constructed.

\section{Generalised Gauss quadrature}\label{sec:GGLQ}%
In the polynomial case, Gauss--Legendre and Gauss--Legendre--Lobatto quadrature rules underpin the construction of SBP operators. For more general function spaces, a natural analogue arises: generalised Gauss quadrature (GGQ) rules. Since an $n$-point quadrature has $2n$ free parameters (nodes and weights), it is reasonable to expect exactness for $2n$ functions. This observation, which goes back to Markov~\cite{markov1898} and later developed by Karlin \& Studden~\cite{karlin196} and Krein~\cite{kreuin1959ideas}, motivates the modern theory of GGQs~\cite{ma1996generalized}. Whereas the theory of classical Gauss quadratures is closely connected to that of orthogonal polynomials, the theory of generalised Gauss quadratures is tied to Tchebyshev systems.

\begin{definition}[Tchebyshev system]
The sequence of functions $g_1, \ldots, g_n \in C^{1}[a,b]$ is referred to as a {\em Tchebyshev system} on a finite interval $[a, b]$ if and only if each of them is real-valued and continuous on $[a, b]$ and the determinants
\[
    \left|\begin{matrix}
    g_1(x_1) & g_2(x_1) & \ldots & g_{n}(x_1)\\
    g_1(x_2) & g_2(x_2) & \ldots & g_{n}(x_2)\\
    \vdots&\vdots&\vdots&\vdots\\
    g_1(x_n) & g_2(x_n) & \ldots & g_{n}(x_n)
    \end{matrix}\right|
\]
are non-zero for any set of distinct points $\{x_k\}_{k=1}^n\subset[a,b]$.
\end{definition}

\begin{theorem}%Karlin \& Studden~\cite{karlin196}%
\cite[Theorem~2.1]{ma1996generalized}\label{thm:KS} Suppose that the functions $g_1, \ldots, g_{2n}$ constitute a Tchebyshev system on the interval $[a, b]$. Then there exists a unique $n$-point quadrature
rule that is Gaussian with respect to the functions $g_1, \ldots, g_{2n}$. Furthermore, all the weights $w_1, \ldots, w_n$, of the quadrature are positive.
\end{theorem}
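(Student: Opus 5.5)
The plan is to follow the classical Markov--Krein argument via moment spaces, as in Karlin--Studden. Write $\mu_j=\int_a^b g_j\omega\,dx$ and consider the moment curve $\gamma(t)=(g_1(t),\ldots,g_{2n}(t))\in\mathbb{R}^{2n}$. Let $\mathcal{M}$ be the moment cone, i.e.\ the convex cone generated by $\{\gamma(t):t\in[a,b]\}$. Since $\omega>0$, the vector $\bm{\mu}=(\mu_1,\ldots,\mu_{2n})$ is a limit of positive combinations of points $\gamma(t)$ (Riemann sums), so $\bm{\mu}\in\mathcal{M}$. Moreover, $\bm{\mu}$ lies in the \emph{interior} of $\mathcal{M}$. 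To see this, suppose a supporting hyperplane $\bm{c}\cdot\bm{v}\ge 0$ through $\bm{\mu}$ existed. Then $u=\sum c_jg_j\ge0$ on $[a,b]$ and $\int u\,\omega=0$, so $u\equiv0$. This contradicts linear independence, which is implied by the Tchebyshev property.

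The first key step is an index count. To each point $t$ assign index $1$ if $t\in(a,b)$ and index $\tfrac12$ if $t\in\{a,b\}$. The Tchebyshev property says any $2n$ distinct points give linearly independent vectors $\gamma(t_k)$. Using this, I would prove the standard lemma: for any set of points of total index $<n$, there is a nonnegative nontrivial polynomial $u\in\mathrm{span}\{g_j\}$ vanishing exactly there, with double zeros at interior points. This is obtained by the usual limiting construction, in which pairs of nearby points are merged. The lemma implies that representations of interior moment points need points of index at least $n$. The delicate part here is the construction of nonnegative polynomials with double zeros using only continuity (not differentiability) of the $g_j$. I would handle it by the Krein device of taking limits of polynomials $u_\epsilon$ vanishing at $t_k$ and $t_k+\epsilon$, normalized, and extracting a convergent subsequence.

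The main step is existence of a representation of index exactly $n$ using only interior points, i.e.\ the lower principal representation. I would use the convexity/continuation argument of Krein. Consider the upper boundary: for fixed $\bm{\mu}$ in the interior, maximise the weight placed at $b$, or equivalently move along the ray $\bm{\mu}-s\gamma(b)$ until it hits $\partial\mathcal{M}$ at some $s^*>0$. Boundary points have a unique representation of index $<n$ (by the lemma, with supporting $u\ge0$). This yields the upper principal representation, with $n-1$ interior points plus $b$, or $n$ points counting $a$ as well. I would then argue via a degree/homotopy argument on the map $(\bm{x},\bm{w})\mapsto\sum w_k\gamma(x_k)$ from $\{a<x_1<\dots<x_n<b,\ w_k>0\}$ to the interior of $\mathcal{M}$. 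This map is injective: two index-$n$ representations of the same point would give a representation of $0$ with $\le 2n$ points of mixed signs, contradicting the Tchebyshev property. It is a local homeomorphism, since the Jacobian determinant is nonzero under a $C^1$ Extended Tchebyshev-type condition, which I would assume or bypass by invariance of domain using injectivity and continuity. It is also proper, because as the nodes collide or tend to endpoints the image tends to $\partial\mathcal{M}$, using the index lemma. A proper injective continuous map between connected $2n$-manifolds is a homeomorphism onto its image by invariance of domain, and that image is open and closed in the interior of $\mathcal{M}$, hence all of it. This gives existence and uniqueness simultaneously. I expect properness to be the main obstacle, and I would verify it carefully using the index lemma.

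Positivity of the weights is built into the parametrisation. Alternatively, once the nodes are found, take the polynomial $u\ge0$ vanishing doubly at all nodes except $x_k$; it exists by the index lemma with index $n-1$. Since $u(x_k)>0$, exactness gives $w_ku(x_k)=\int u\,\omega>0$, so $w_k>0$.
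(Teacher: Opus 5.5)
Your proof is correct, but it gets existence by a different mechanism than the paper. The paper gives no argument of its own. It imports the statement from Ma et al., who obtain it from Karlin--Studden's theorem that every interior point of the moment space has a lower principal representation; for $2n$ functions this is $n$ interior nodes with positive weights.

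You keep the same moment-cone and index framework but replace that citation with a self-contained topological argument:
\begin{itemize}
\item $\Phi(\bm x,\bm w)=\sum_k w_k\gamma(x_k)$ is injective on $U$, because the difference of two representations involves at most $2n$ distinct nodes.
\item By invariance of domain, $\Phi(U)$ is open. This also gives $\Phi(U)\subseteq\mathrm{int}\,\mathcal{M}$ for free, with no zero-counting theorem needed.
\item $\Phi(U)$ is closed in $\mathrm{int}\,\mathcal{M}$, because every degenerate limit has index $<n$ and so lies on a supporting hyperplane by your nonnegative-polynomial lemma.
\item Connectedness of $\mathrm{int}\,\mathcal{M}$ then gives $\Phi(U)=\mathrm{int}\,\mathcal{M}\ni\bm\mu$.
\end{itemize}

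What your route buys:
\begin{itemize}
\item It uses only continuity of the $g_j$.
\item Uniqueness among \emph{all} $n$-point rules follows from the same linear-independence count.
\item It transfers verbatim to the closed configuration $a<x_1<\dots<x_{n-1}<b$ with $n+1$ positive weights. There are again $2n$ parameters, and every degeneration lowers the index below $n$. So it also proves Theorem~\ref{thm:new}, which the paper likewise only cites.
\item The Jacobian of $\Phi$ has determinant $\pm w_1\cdots w_n\det V$, with $V$ the Hermite--Vandermonde matrix of Definition~\ref{def:hvmatrix}. Hence the Hermite-system hypothesis is exactly what makes $\Phi$ a local diffeomorphism.
\end{itemize}
In exchange, the Karlin--Studden route supplies the upper/lower principal representations and the interlacing that the paper's continuation scheme relies on, whereas your argument is purely existential.

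Small repairs:
\begin{itemize}
\item In the properness step, also treat weights tending to $0$ (the index drops) and to $\infty$. The latter is impossible: normalising gives a nontrivial nonnegative combination of at most $n$ distinct $\gamma(x)$ equal to $0$.
\item Before asserting $\bm\mu\in\mathcal{M}$, either show $\mathcal{M}$ is closed or work with $\overline{\mathcal{M}}$, which has the same interior.
\item In the lemma, pad the zero set of $u_\epsilon$ to exactly $2n-1$ points. Use extra pairs, plus a single zero at $b-\epsilon$ when both endpoints are prescribed. Then the constant sign of the Tchebyshev determinant on ordered tuples forces its sign pattern.
\item Drop the upper-boundary aside. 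It is unused, and it misdescribes the upper principal representation, which consists of $a$, $b$ and $n-1$ interior nodes, i.e.\ $n+1$ points.
\end{itemize}
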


\begin{corollary}\label{corr:FSBPexist}
    If $\mathcal{G} = (\mathcal{FF})'$ forms a Tchebyshev system on $[a,b]$, then an $\mathcal{F}$-based SBP operator exists. Furthermore, the optimal $\mathcal{F}$-based SBP operator has nodes and weights given by the GGQ rule for the function space $\mathcal{G}$.
\end{corollary}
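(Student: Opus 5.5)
The plan has two parts: existence, and then optimality.

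\emph{Existence.} Suppose $\mathcal{G}=(\mathcal{FF})'$ has dimension $2n$ and is spanned by a Tchebyshev system $g_1,\dots,g_{2n}$. Theorem~\ref{thm:KS} then yields a unique $n$-point rule $I_{\bm{x},\bm{w}}$ with positive weights that is Gaussian for $g_1,\dots,g_{2n}$, taking $\omega\equiv1$. Since quadrature is linear, this rule is exact on all of $\mathcal{G}$. Theorem~\ref{thm:G2024} also requires $\mathcal{F}$ to be linearly independent on the nodes $\bm{x}$. I would state this as part of the standing hypotheses, or at least remark on it. A plausible justification runs as follows: if some $0\neq f\in\mathcal{F}$ vanished at all $n$ nodes, then $(f^2)'\in\mathcal{G}$ would integrate to zero, which is consistent with the rule; so this route does not obviously force a contradiction. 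I would therefore simply assume the hypothesis of Theorem~\ref{thm:G2024}, as the paper does. Theorem~\ref{thm:G2024} then gives a diagonal-norm FSBP operator with $P=\mathrm{diag}(\bm{w})$, which proves the first claim.

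\emph{Optimality.} The claim to prove is that no diagonal-norm FSBP operator has dimension $m<n$. By Theorem~\ref{thm:G2024}, any such operator gives a positive $m$-point rule exact on $\mathcal{G}$. Since $\dim\mathcal{G}=2n>m$, I would take the $m$ nodes $y_1,\dots,y_m$ of that rule, add $n-m$ further distinct points $y_{m+1},\dots,y_n$, and duplicate the set into $2n$ conditions. The aim is to build a nonnegative $g\in\mathcal{G}$ that vanishes at all the nodes but is not identically zero.

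The standard Tchebyshev-system argument, a generalised-polynomial analogue of $\prod(x-y_k)^2$, supplies such a $g$: a function in the span of a $2n$-dimensional Tchebyshev system with double zeros at $m\le n-1$ interior points (or simple zeros at endpoints) that is nonnegative, by Karlin--Studden. Then $\int_a^b g\,dx>0$, while the quadrature gives $0$, a contradiction. So $m\ge n$.

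The main obstacle is exactly this existence of a nonnegative "double-zero" element of $\mathcal{G}$. It generally needs an extended (ET/ECT) Tchebyshev property, or at least Karlin--Studden's theorem on nonnegative polynomials in T-systems, which holds for T-systems on closed intervals when the number of zeros, counted with index, is at most $2n-1$. I would first try to invoke Karlin--Studden directly; $m\le n-1$ interior zeros with index $2$ gives index sum at most $2n-2<2n$, so the result applies.

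Finally, I would combine the uniqueness in Theorem~\ref{thm:KS} with this lower bound. The GGQ rule has exactly $n$ nodes, so it attains the minimum and its nodes and weights are optimal.
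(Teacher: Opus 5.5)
Your existence step is the same as the paper's (Theorem~\ref{thm:KS}, then Theorem~\ref{thm:G2024}). Your optimality step, however, is a genuinely different and more complete argument.

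The paper only says that optimality ``follows from the fact that the rule is Gaussian''. Since Definition~\ref{def:gaussian} merely counts nodes, this asserts rather than proves that no rule with fewer nodes exists. You actually prove the lower bound:
\begin{itemize}
\item The ``only if'' half of Theorem~\ref{thm:G2024} uses only $Q=PD$, $P=\mathrm{diag}(\bm{w})$ and exactness on $\mathcal{F}$, and needs no linear independence. It turns any diagonal-norm operator of size $m$ into an $m$-point $\mathcal{G}$-exact rule.
\item For $m\le n-1$ the index sum is at most $2m\le 2n-2\le 2n-1$. Krein's theorem (Karlin--Studden) then gives $0\neq g\in\mathcal{G}$ with $g\ge 0$ vanishing at the nodes, so $\int_a^b g\,dx>0=I_{\bm{x},\bm{w}}[g]$, whatever the signs of the weights.
\item A plain T-system on a closed interval suffices for this, so your ET/ECT hedge is unnecessary.
\end{itemize}
Combined with the uniqueness in Theorem~\ref{thm:KS}, this shows that every diagonal-norm operator of size $n$ carries exactly the GGQ nodes and weights. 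That is the precise content of the corollary's second sentence. Like the paper, you only compare against diagonal-norm operators, not against all FSBP operators as the definition of optimality requires.

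Two corrections are needed.

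First, delete the padding step. Prescribing double zeros at $n$ interior points has index $2n>2n-1$, which is outside Krein's range. It can also fail outright: at the GGQ nodes no nonnegative $g\neq 0$ vanishing there exists, since the GGQ would integrate it to $0$. Apply the theorem to the $m$ actual nodes only, as you end up doing.

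Second, the linear-independence hypothesis you chose to assume cannot be derived from the Tchebyshev assumption, because it fails in the model case.
\begin{itemize}
\item Take $\mathcal{F}=\mathbb{P}_n$. Then $\mathcal{G}=\mathbb{P}_{2n-1}$ is a T-system, and its GGQ is the $n$-point Gauss--Legendre rule.
\item Yet $f=\prod_{k=1}^n(x-x_k)\in\mathcal{F}$ vanishes at the nodes while $f'(x_j)\neq 0$. So condition \emph{i.}\ of Definition~\ref{def:2_1} fails for every $D$, and the same holds on any $n$ nodes.
\item With your lower bound, the optimal size is therefore $n+1$, attained by Gauss--Lobatto, not by the GGQ.
\end{itemize}
So the corollary's second sentence is false as stated. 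It needs ``$\mathcal{F}$ is linearly independent on the GGQ nodes'' as an explicit hypothesis; this holds, e.g., if $\mathcal{F}$ is itself a T-system with $\dim\mathcal{F}\le n$. The paper's one-line proof silently relies on the same assumption, as does its claim in Section~\ref{subsec:gqns} that Gauss--Legendre satisfies Theorem~\ref{thm:G2024}. Your bound $m\ge n$, by contrast, holds unconditionally.

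Finally, for open nodes Theorem~\ref{thm:G2024} must be read with a general boundary matrix satisfying $\bm{f}^\top B\bm{g}=[fg]_a^b$ on $\mathcal{F}$. With the literal $B=\mathrm{diag}(-1,0,\ldots,1)$ of Definition~\ref{def:2_1}, the SBP identity only yields exactness on $[x_1,x_n]$, which the open GGQ does not satisfy.
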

\begin{proof}[Proof of Corollary \ref{corr:FSBPexist}]
    The proof follows from Theorems~\ref{thm:G2024} and~\ref{thm:KS}. In particular, if $\mathcal{G} = (\mathcal{FF})'$ forms a Tchebyshev system on $[a,b]$ then from Theorem~\ref{thm:KS} there exists a positive quadrature rule that is ${\mathcal{G}}$-exact. From Theorem~\ref{thm:G2024} it follows that a diagonal-norm FSBP operator exists. The optimality follows from the fact that the rule in Theorem~\ref{thm:KS} is Gaussian on $\mathcal{G}$.
\end{proof}%

\noindent Theorem~\ref{thm:KS} above is, more precisely,  a corollary of various results from Karlin \& Studden~\cite{karlin196} given by~\cite{ma1996generalized} (although the latter credits the former with the result, and similar results can be found in~\cite{kreuin1959ideas} and~\cite{markov1898}). What Karlin \& Studden prove is (i) the existence of {\em upper and lower principal representations of moment vectors from the moment space} induced by the Tchebyshev set $g_1, \dots, g_{2n}$~\cite[Theorem 3.1]{karlin196} and (ii) that the {\em lower} principal representation corresponds to an $n$-point quadrature rule which is exact for $g_1, \dots, g_{2n}$~\cite[Theorem 8.1]{karlin196}. Importantly, their results also show that the {\em upper} principal representation results in a {\em closed} $n+1$ point rule that is also exact for $g_1, \dots, g_{2n}$, i.e., a generalized Gauss--Lobatto quadrature (GGLQ) rule. (See~\cite[Section~2]{huybrechs2022} for a summary.) %Karlin \& Studden give an example in~\cite[Corollary 8.1a]{karlin196} where the function space consists of spline functions. T
This can be summarised in the following result.

%\red{TODO: Remove this paragraph? Furthermore,  Karlin \& Studden discuss what amounts to a GGLQ formula in~\cite[Corollary 8.1b]{karlin196}, for a function space consisting of spline functions. They refer to this as a `Radau' formula but, in more modern terminology, this phrase typically refers to a quadrature rule that employs precisely one of the end points. They do, in fact, discuss the existence of Radau rules for Tchebyshev systems of odd dimension, i.e., $g_1, \dots, g_{2n-1}$, where again the rules correspond to upper and lower {principal} representations. Generalised Gauss Radau quadratures are utilised in~\cite{huybrechs2022}.} 

\begin{theorem}[Karlin \& Studden~\cite{karlin196}]\label{thm:new} Suppose that the functions $g_1, \ldots, g_{2n}$ constitute a Tchebyshev system on the interval $[a, b]$. Then there exists a unique $(n+1)$-point Gauss--Lobatto quadrature with respect to the $2n$ functions $g_1, \ldots, g_{2n}$. Furthermore, all the weights $w_1, \ldots, w_{n+1}$, of the quadrature are positive.
\end{theorem}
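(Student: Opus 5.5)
The plan is to follow the Karlin--Studden route through the moment space: obtain existence and positivity from the upper principal representation, and uniqueness from a zero-counting argument based on the Tchebyshev property. Let $\bm{c} = (c_1,\ldots,c_{2n})$ with $c_j=\int_a^b g_j\omega\,dx$ be the moment vector, and let $\mathcal{M}$ be the moment space, i.e.\ the convex cone generated by the curve $\gamma(x) = (g_1(x),\ldots,g_{2n}(x))$, $x\in[a,b]$. A positive quadrature exact for $g_1,\ldots,g_{2n}$ is exactly a representation $\bm{c}=\sum_k w_k\gamma(x_k)$ with $w_k>0$. Since $\omega>0$, $\bm{c}$ lies in the interior of $\mathcal{M}$.

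For existence, I would count indices in the Karlin--Studden sense, with interior nodes counting $1$ and endpoints counting $1/2$. A closed $(n+1)$-point rule then has index $(n-1)+1 = n$, the same as the $n$-point Gauss rule of Theorem~\ref{thm:KS}. I would invoke the principal-representation theorem~\cite[Theorem 3.1]{karlin196}: every interior point of the moment space of a Tchebyshev system of dimension $2n$ has exactly two representations of index $n$. The lower one does not use $b$ and is the GGQ rule. The upper one uses $b$, and by parity of the dimension it must also use $a$, giving nodes $a=x_1<\dots<x_{n+1}=b$ with positive weights. Positivity comes for free because representations in the cone have positive coefficients.

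If a self-contained argument is wanted, I would prove existence by continuation instead. Fix $x_1=a$ and $x_{n+1}=b$, and solve for the $n-1$ interior nodes and $n+1$ weights, which is $2n$ unknowns for $2n$ equations. The Tchebyshev property keeps the Jacobian nonsingular, and a homotopy from a system where the rule is known (for example, the polynomials) carries the solution along. This is the step I expect to be the main obstacle. One must prove that nodes cannot coalesce or escape to the endpoints, and that weights cannot vanish, along the path. For this reason I would rely on the cited moment-space theory rather than redo it.

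Uniqueness and positivity I would verify directly. Positivity of $w_k$ follows by choosing $u\in\mathrm{span}\{g_j\}$ that is nonnegative, vanishes at every node except $x_k$, and has positive integral; then $w_k u(x_k)=\int u\omega>0$. Such a $u$ exists in a Tchebyshev system of dimension $2n$: it needs double zeros at the $n-2$ interior nodes other than $x_k$ and simple zeros at the endpoints, which counts as $2(n-2)+2=2n-2<2n$ zeros. I would build it via the standard Krein construction of nonnegative polynomials. For uniqueness, suppose two distinct closed rules of index $n$ exist. Subtracting them gives a signed measure annihilating $\mathrm{span}\{g_j\}$ and supported on at most $2n$ points counted with index. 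The Tchebyshev determinant condition then forces all its coefficients to vanish, so the rules coincide; the endpoints are shared, which reduces the count below $2n$ free points.
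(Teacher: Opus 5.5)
Your main line (the closed rule is the upper principal representation of the interior moment vector from Karlin--Studden's Theorem~3.1, the index/parity count forces both $a$ and $b$ into it, and positivity comes from the representing measure) is exactly the paper's route, whose proof is just a citation of \S3 and \S8 of Karlin--Studden; your extra uniqueness check is also sound, because two closed rules share $a$ and $b$, so their difference is supported on at most $2+2(n-1)=2n$ distinct points (the relevant count is points, not index), and the Tchebyshev determinant then forces it to vanish. The only false claim is in the continuation sketch you set aside: the Tchebyshev property does not by itself keep that Jacobian (essentially $\widehat V$) nonsingular, since for $g_j=x^{3(j-1)}$, $j=1,\dots,4$, on $[-1,1]$ the Gauss--Lobatto interior node is $0$, where every $g_j'$ vanishes; this is why the paper must assume $\widehat V$ is invertible, but nothing in your argument depends on it.
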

\begin{proof}
    Follows from results in \S3 and \S8 of \cite[\S3]{karlin196} and the discussion above. 
    \end{proof}
    
\begin{remark}
Karlin \& Studden discuss what amounts to a GGLQ formula in~\cite[Corollary 8.1b]{karlin196}, for a function space consisting of spline functions. They refer to this as a `Radau' formula but, in more modern terminology, this phrase typically refers to a quadrature rule that employs precisely one of the end points.%\todoinline{Remove this remark if we are short of space.}
\end{remark}

\begin{corollary}\label{corr:FSBPexist2}
    If $\mathcal{G} = (\mathcal{FF})'=\{g_1, \ldots, g_{2n}\}$ forms a Tchebyshev system on $[a,b]$, then a {closed} $\mathcal{F}$-based SBP operator exists. Furthermore, the optimal closed $\mathcal{F}$-based SBP operator has nodes and weights given by the $(n+1)$-point GGLQ rule for the function space $\mathcal{G}$.
\end{corollary}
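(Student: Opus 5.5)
The proof follows the pattern of Corollary~\ref{corr:FSBPexist}, with Theorem~\ref{thm:new} in place of Theorem~\ref{thm:KS}. Since $g_1,\ldots,g_{2n}$ form a Tchebyshev system on $[a,b]$, Theorem~\ref{thm:new} gives a unique $(n+1)$-point rule $I_{\bm{x},\bm{w}}$ with $x_1=a$, $x_{n+1}=b$ and $w_k>0$ for all $k$. By construction it is exact for $g_1,\ldots,g_{2n}$, and by linearity for all of $\mathcal{G}=(\mathcal{FF})'$. So $I_{\bm{x},\bm{w}}$ is a positive, closed, $(\mathcal{FF})'$-exact rule. Theorem~\ref{thm:G2024} then yields a diagonal-norm FSBP operator $D=P^{-1}Q$ with $P=\mathrm{diag}(\bm{w})$ on these nodes, and it is closed because $a,b\in\bm{x}$. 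This gives existence.

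For optimality we must show that no closed diagonal-norm FSBP operator has dimension $m\le n$. Suppose one did, on nodes $a=y_1<\cdots<y_m=b$. By Theorem~\ref{thm:G2024}, its norm gives a positive closed $m$-point rule exact on $\mathcal{G}$. I would rule this out with the standard Tchebyshev-system counting argument, in two cases.

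\emph{Case $m\le n-1$.} The rule uses at most $n-1$ nodes, and $2n\ge m+1$. Pick $m+1\le 2n$ of the $g_j$. Their span contains a nonzero $g$ vanishing at all nodes, since this is $m$ linear conditions on $m+1$ coefficients. The rule gives $I[g]=0$, so one needs $\int_a^b g\,\omega\,dx\neq 0$ for a contradiction, and the interior zeros of $g$ must be controlled to get that.

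\emph{Case $m=n$.} Here I would construct an element of $\mathcal{G}$ that is nonnegative, vanishes exactly at the $n$ nodes (double zeros at the $n-2$ interior nodes, simple zeros at $a$ and $b$), and is not identically zero. This takes $2(n-2)+2=2n-2$ conditions, which is fewer than $2n$. Its integral would be positive while the quadrature gives $0$, a contradiction. In fact the same construction with fewer nodes also covers $m<n$, so I would use it uniformly for all $m\le n$.

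The main obstacle is this construction of a nonnegative nontrivial element of $\mathcal{G}$ with prescribed zeros. For polynomials it is immediate, for example $(x-a)(b-x)\prod(x-y_k)^2$. For a general Tchebyshev system one needs the Karlin--Studden theory of nonnegative polynomials with prescribed zeros. That requires $\mathcal{G}$ to be an extended complete Tchebyshev system, so that double zeros are meaningful, or else a perturbation argument that splits each double zero into two nearby simple zeros and passes to a limit. I would cite \cite[Ch.~I--II]{karlin196} for this. Alternatively, I would argue as the paper does for Corollary~\ref{corr:FSBPexist}: the Gauss--Lobatto rule is, by the principal-representation theory in \cite[\S3]{karlin196}, the closed representation of the moment vector with the fewest nodes. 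A closed representation with fewer nodes would have index below $n+1$, contradicting the minimality of principal representations.
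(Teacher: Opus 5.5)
Your existence argument is the paper's: Theorem~\ref{thm:new} gives a positive closed $(n+1)$-point $\mathcal{G}$-exact rule, and Theorem~\ref{thm:G2024} turns it into a closed diagonal-norm operator. The two routes differ on optimality.

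The paper (via Corollary~\ref{corr:FSBPexist}) gets optimality in one line from the rule being of Gauss type. This implicitly assumes that a Gauss--Lobatto rule for a Tchebyshev system has the fewest nodes among closed $\mathcal{G}$-exact rules. Theorem~\ref{thm:new} as stated gives existence, uniqueness and positivity, not this minimality.

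You actually prove the lower bound, by the classical Markov--Krein argument. Take a nonzero $g\in\mathcal{G}$ with $g\ge 0$ that vanishes at the nodes of a closed $m$-node rule, $m\le n$. Then $\int_a^b g\,\omega\,dx>0$ but $I[g]=0$. This costs a lemma but buys more than the paper's argument:
\begin{itemize}
\item no closed rule with at most $n$ nodes is $\mathcal{G}$-exact, \emph{whatever} the signs of its weights;
\item it makes explicit that optimality is over diagonal norms.
\end{itemize}
That restriction is genuinely needed, because Definition~\ref{def:2_1} allows dense $P$. Take $\mathcal{F}=\mathrm{span}\{1,x,x^2,e^x\}$. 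Then $\mathcal{G}=\mathrm{span}\{1,x,x^2,x^3,e^x,xe^x,x^2e^x,e^{2x}\}$ is an ECT-system with $n=4$, so the GGLQ rule has $5$ nodes. Yet $\mathcal{F}'\subset\mathcal{F}$, so $P=F^{-\top}GF^{-1}$, with $G$ the exact Gram matrix of the basis, gives a closed FSBP operator on $4<5$ nodes.

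The lemma you flag as the main obstacle needs no ECT assumption. For $m=n$, replace each interior node $y_k$ by $y_k\pm\epsilon$ and add one auxiliary point $a+\epsilon$. Together with $a,b$ these are $2n-1$ points. The determinant element of $\mathcal{G}$ vanishing there changes sign exactly at the interior ones, so after normalisation it is positive on every long gap. A limit point of the normalised coefficient vectors as $\epsilon\to 0$ is then a nonzero $g\ge 0$ vanishing at all nodes. You do not need ``exactly'', and $m<n$ follows by padding with extra nodes.

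Three small repairs are needed.
\begin{itemize}
\item Theorem~\ref{thm:G2024} assumes $\mathcal{F}$ is linearly independent on the nodes, which fails on $m<\dim\mathcal{F}$ nodes. Obtain exactness of the smaller rule directly from $\bm f^\top(Q+Q^\top)\bm g=\bm f^\top B\bm g$ and $Q\bm g=P\bm g'$.
\item For existence, you should check independence on the GGLQ nodes. It holds: a nonzero $f\in\mathcal{F}$ vanishing at all $n+1$ nodes would make $(f^2)'\in\mathcal{G}$ vanish at $2n+1$ points (Rolle), forcing $f\equiv 0$.
\item The claim that the optimal operator \emph{has} the GGLQ nodes and weights also needs the uniqueness part of Theorem~\ref{thm:new}, applied to any closed diagonal-norm operator on $n+1$ nodes. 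You do not state this step.
\end{itemize}
In your principal-representation alternative, the threshold is index $2n$ (a closed $m$-node rule has index $2m-2$), not $n+1$.
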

\begin{proof}
    Follows from Theorems~\ref{thm:G2024} and~\ref{thm:new}, similarly to Corollary~\ref{corr:FSBPexist}.
    \end{proof}

\noindent Thus, both GGQ and GGLQ rules provide the optimal building blocks for FSBP operators whenever $(\mathcal{F}\mathcal{F})'$ forms a Tchebyshev system. The practical question, however, is how to compute such rules in concrete function spaces. While the computation of GGQ rules have received some attention, e.g., \cite{bremer2010, huybrechs2022, ma1996generalized}, the literature on GGLQ rules is comparatively lacking---a gap we address in Section~\ref{subsubsec:modified}.

\subsection{Computation of GGQs}\label{subsubsec:Ma_etal}
Recall from Definition~\ref{def:gaussian} that, for a given function space $\mathcal{G}$ of dimension $2n$ on $[a,b]$, spanned by $\{g_1, g_2, \ldots, g_{2n}\}$, the defining property of a GGQ is that 
\begin{equation}\label{eqn:nonlinear}
\int_a^b g_j(x)\omega(x)\, dx = \sum_{k=1}^{n}w_k g_j(x_k), \quad j = 1, \ldots, 2n.    
\end{equation}
\begin{remark}
    In principle, for the construction of FSBP operators, we need consider only the weight function $\omega(x) = 1$. However, as discussed below, it will be beneficial to use modified weight functions during the continuation process used in the construction procedure described below.
\end{remark}
%If the integrals on the left-hand side can be computed (or at least approximated numerically), t
Equation~(\ref{eqn:nonlinear}) results in a $2n\times2n$ nonlinear system of equations to solve for the unknown nodes $x_k$ and weights $w_k$, $k = 1, \ldots, n$. One can attempt to solve this nonlinear system directly via, say, the Newton--Raphson method, but in general (i) obtaining initial guesses is difficult and (ii) the computation is expensive since the Jacobians are dense.   Ma {\em et al}.~\cite{ma1996generalized} were the first to resolve the issues above by (i) using a continuation approach to obtain suitable initial conditions and (ii) reformulating~(\ref{eqn:nonlinear}) to an equivalent system which, when $\mathcal{G}$ forms an {\em extended Hermite system} can be solved via a quadratically convergent $n$-dimensional quasi-Newton method with a diagonal Jacobian.

\begin{definition}[Hermite--Vandermonde matrix]\label{def:hvmatrix}
For a finite sequence of functions $g_1(x)$, \ldots, $g_{2n}(x)$ and set of $n$ distinct points, $\{x_k\}_{k=1}^n$, the $2n\times2n$ Hermite--Vandermonde matrix is given by
\[
    V = \left[\begin{matrix}
    g_1(x_1) & g_2(x_1) & \ldots & g_{2n}(x_1)\\
    g_1(x_2) & g_2(x_2) & \ldots & g_{2n}(x_2)\\
    \vdots&\vdots&\vdots&\vdots\\
    g_1(x_n) & g_2(x_n) & \ldots & g_{2n}(x_n)\\
    g'_1(x_1) & g'_2(x_1) & \ldots & g'_{2n}(x_1)\\
    \vdots&\vdots&\vdots&\vdots\\
    g'_1(x_n) & g'_2(x_n) & \ldots & g'_{2n}(x_n)
    \end{matrix}\right].
\]
\end{definition}
\begin{definition}[Hermite system]
A finite sequence of functions $g_1$, \ldots, $g_{2n}$ is said to form a {\em Hermite system} on $[a,b]$ if $g_k\in C^1[a,b]$ and $\det(V)\not=0$ for any set of distinct points $\{x_k\}\subset[a,b]$, where $V$ is the corresponding Hermite--Vandermonde matrix.

\end{definition}
\begin{definition}[Extended Hermite system]
A finite sequence of functions $g_1$, \ldots, $g_{2n}$ is said to form an {\em extended Hermite system} on $[a,b]$ if it is both a Tchebyshev system and a Hermite system.
\end{definition}

Since~\cite{ma1996generalized}, there have been noteworthy 
contributions that improve upon the original algorithm~\cite{cheng1999, yarvin1998} or provide alternative approaches~\cite{bremer2010, huybrechs2022, huybrechs2008}. In particular, the algorithm of Bremer {\em et al.}~\cite{bremer2010} is advantageous in that it places less restrictive conditions on the function space to which it can be applied and, in situations where GGQs do not exist, can still produce ``nearly Gaussian quadratures''. Huybrechs~\cite{huybrechs2022, huybrechssoftware} presents a more robust algorithm for Tchebyshev systems and even briefly discusses GGLQs. %{Huybrechs' algorithm is available in a Julia package~\cite{huybrechssoftware}, but unfortunately, the implementation does not possess GGLQ functionality.} % (although it does support generalised Gauss--Radau quadratures (GGRQ)).
However, since the algorithm of Ma {\em et al}.\ is the most readily adaptable to the computation of GGLQs, we focus our attention there.

The algorithm of Ma {\em et al}.\ proceeds as follows. Consider a finite sequence of functions, ${\mathcal{G}} = g_1, \ldots , g_{2n}$, which form an extended Hermite system, and a positive weight function $\omega(x)$. For a given set of points, $\bm{x} = \{x_j\}_{j=1}^n$, in $[a,b]$ construct a {\em Hermite--Lagrange basis}, $\{\sigma_1, \ldots, \sigma_n, \eta_1, \ldots, \eta_n\}$, of $\mathcal{G}$ so that
\[
\sigma_i(x)  = \sum_{j=1}^n\alpha_{ij}g_j(x), \qquad \eta_i(x)  = \sum_{j=1}^n\alpha_{ij}g_j(x), 
\]
and
\begin{equation}\label{eqn:mahermlag}
\sigma_i(x_k) = 0, \quad \sigma'_i(x_k) = \delta_{ik}, \quad \eta_i(x_k) = \delta_{ik}, \quad \eta'_i(x_k) = 0, \qquad i,k = 1, \ldots, n.
\end{equation}
Since we assume that ${\mathcal{G}}$ forms an extended Hermite system, this is always possible~\cite[Lemma 4.2]{ma1996generalized}.\footnote{The basis can be found by simply inverting the Hermite--Vandermonde matrix of Definition~\ref{def:hvmatrix}, evaluated at the points $\bm{x}$, since det$(V)\not=0$.} One can then show---see~\cite[Theorem~4.1]{ma1996generalized}---that the points $\bm{x}$ are Gaussian with respect to $\omega(x)$ and ${\mathcal{G}}$ if and only if
\begin{equation}\label{eqn:sigmacondition}
\int_a^b \sigma_i(x)\omega(x)\, dx = 0, \quad i=1,\ldots, n,
\end{equation}
in which case the weights are given by 
\[
w_i = \int_a^b \eta_i(x)\omega(x)\,dx, \quad i=1,\ldots, n.
\]
This leads to a quasi-Newton iteration of the form 
\[
x_k^{[\ell+1]} = x_k^{[\ell]} + \frac{\int_a^b \sigma^{[\ell]}_k(t)\omega(x)\,dx}{\int_a^b \eta^{[\ell]}_k(x)\omega(x)\, dx}, \quad k=1,\ldots, n, \ \ell = 0, 1, \ldots,
\]
where $\{\sigma^{[\ell]}_k\}_{k=1}^n$ and $\{\eta^{[\ell]}_k\}_{k=1}^n$ are the basis functions~(\ref{eqn:mahermlag}) induced by the points $\bm{x}^{[\ell]}$.
For a sufficiently good initial guess, $\bm{x}^{[0]}$, this iteration will converge quadratically to a vector $\bm{x}$  such that~(\ref{eqn:sigmacondition}) is satisfied~\cite[Corollary 4.2]{ma1996generalized}.

To address the problem of initial guesses, Ma {\em et al}.\ use a continuation approach, whereby the GGQ nodes are successively calculated for a function space $G(t) = tG + (1-t)G^*$,  $t\in\{t_0 = 0, t_1, t_2, \ldots, t_N = 1\}$, where $G^*$ is some function space for which the GGQ nodes are known, for example polynomials, $\mathbb{P}_{2n-1}$. A drawback of this approach is that it requires that $G(t)$ forms an extended Hermite system for all $t\in(0, 1)$, not just $t = 0$ and $t = 1$. Since this is generally not guaranteed, a more robust alternative is proposed by Yarvin and Rokhlin~\cite{yarvin1998}, namely continuation in the integration measure. In particular, if $\omega(x)$ is the desired weight function, they propose obtaining GGQs for the weight functions 
\begin{equation}\label{eqn:continuation}
    \omega(x,t) = t\omega(x) + (1-t)\sum_{k=1}^{n}\delta(x-c_j),
\end{equation}
for $t\in\{t_0 = 0, t_1, t_2, \ldots, t_N = 1\}$. The $t_k$ can  be fixed a priori or selected in an adaptive manner. When  $t = 0$ the weights are simply given by $x_j = c_j$. To choose $c_j$, they use the interlacing properties of GGQ nodes---see~\cite[Theorem 3.2]{karlin196}---and compute GGQs of increasing degree. % and taking $c^{[j+1]}_1 = x^{[j]}_1, c^{[j+1]}_{2:n-1} = (x^{[j]}_{1:n-2}-x^{[j]}_{2:n-1})/2, c^{[j+1]}_n = x^{[j]}_{n-1}$. 
 We employ this approach in our computations, taking $\omega(x) = 1$. 
 
The approach of Huybrechs~\cite{huybrechs2022} also employs continuation. There, GGQs of increasing degree are computed by continuously transforming between upper and lower principal representations, resulting in GGQ and generalised Gauss--Radau quadrature (GGRQ) rules, respectively. When applied to Tchebyshev sets, this requires only the solution of a sequence of piecewise-smooth, monotonic minimisation problems. Since the deformations are continuous, this results in (essentially) arbitrarily good initial guesses, making convergence of the Newton iteration more robust. However, modifying the approach to compute GGLQs is more involved and requires a different continuation mechanism. We leave this for future investigations.

\begin{remark}\label{remark:GSBP}
As discussed in Section~\ref{subsec:gqns} above, most SBP operators are based on closed quadrature rules, but there are exceptions~\cite{Fernández2014214,Lundquist2024,Nordström2017451}. The construction of optimal {\em open} GSBP operators could directly use the existing GGQ theory presented in this section without the extension to GGLQs in the next.
%while later efforts worth mentioning involves~\cite{carpenter1996spectral}   
%    Similar matrices arise in the derivation of the {\em other} kind of GGLQs, mentioned in footnote 1.
\end{remark}

\subsection{Computation of GGLQs}\label{subsubsec:modified}
Modifying the algorithm of Section~\ref{subsubsec:Ma_etal} to enable computation of GGLQ nodes and weights is relatively straightforward. As before, consider a given function space $\mathcal{G} = \text{span}\{g_1, g_2, \ldots, g_{2n}\}$ which forms a Tchebyshev system on $[a,b]$, and a positive weight function $\omega(x)$. Now, consider a closed $n+1$-point grid of distinct points, $\bm{x} = \{x_k\}_{k=0}^{n}\subset[a,b]$ with $x_0 = a$ and $x_n = b$, and construct a {\em quasi-Hermite--Lagrange basis}, $\{\sigma_1, \ldots, \sigma_{n-1}, \eta_0, \ldots, \eta_{n}\}$, so that 
\[
\sigma_i(x_k) = 0, \quad \eta_i(x_k) = \delta_{ik}, \qquad  k = 0, \ldots, n
\]
and
\begin{equation}\label{eqn:modmadiff}
\sigma'_i(x_k) = \delta_{ik} \quad \eta'_i(x_k) = 0, \qquad k = 1, \ldots, n-1,
\end{equation}
where in both cases $i$ runs over $1,\ldots, n-1$ for the $\sigma_i$ and $0, \ldots, n$ for the $\eta_i$.
The key differences from the algorithm in Section~\ref{subsubsec:Ma_etal}  are that: our grid now consists of $n+1$ points, rather than $n$;  the derivative conditions in~(\ref{eqn:modmadiff}) are not enforced at the first and last nodes as they are in~(\ref{eqn:mahermlag}); there are $n-1$ terms in $\{\sigma_1, \ldots, \sigma_{n-1}\}$ and $n+1$ terms in $\{\eta_0, \ldots, \eta_{n}\}$ compared to $n$ in both previously. However, this still results in a square system of $2n$ linear equations and, if the matrix
\[
    \widehat V = \left[\begin{matrix}
    g_1(x_0) & g_2(x_0) & \ldots & g_{2n}(x_0)\\
    g_1(x_1) & g_2(x_1) & \ldots & g_{2n}(x_1)\\
    %g_1(x_2) & g_2(x_2) & \ldots & g_{2n}(x_2)\\
    \vdots&\vdots&\vdots&\vdots\\
    g_1(x_n) & g_2(x_n) & \ldots & g_{2n}(x_n)\\
    g'_1(x_1) & g'_2(x_1) & \ldots & g'_{2n}(x_1)\\
    \vdots&\vdots&\vdots&\vdots\\
    g'_1(x_{n-1}) & g'_2(x_{n-1}) & \ldots & g'_{2n}(x_{n-1})
    \end{matrix}\right].
\]
can be inverted (taking the place of~(\ref{def:hvmatrix})), then we may expand the $\sigma_i$ and $\eta_i$ as linear combinations of the $g_j$ (and vice-versa), as in the original algorithm. 

One can readily adapt the theorems and proofs of \cite[Section 4]{ma1996generalized} to show that if $x_0 = a$ and $x_n = b$, then $I_{\bm{x}, \bm{w}}$ is a generalised Gauss--Lobatto rule on ${\mathcal{G}}$ with respect to $\omega(x)$ if and only if the points $x_1,\ldots,x_{n-1}$ are such that 
\[
\int_a^b \sigma_i(x)\omega(x)\, dx = 0, \quad i=1,\ldots, n-1,
\]
in which case the weights are
\[
w_i = \int_a^b \eta_i(x)\omega(x)\,dx, \quad i=0,\ldots, n.
\]
Furthermore, as before, the interior points can be found by the locally quadratically convergent quasi-Newton iteration
\[
x^{[\ell+1]}_k = x_k^{[\ell]} + \frac{\int_a^b \sigma_k^{[\ell]}(x)\omega(x)\,dx}{\int_a^b \eta_k^{[\ell]}(x)\omega(x)\, dx}, \quad k=1,\ldots, n-1, \ \ell = 0, 1, \ldots.
\]
\noindent The continuation scheme~(\ref{eqn:continuation})  of~\cite{yarvin1998} can be employed without modification. Examples are given in Section~\ref{subsection:examples}.

\subsection{Constructing FSBP operators}
Once optimized nodes $\bm{x}$ and the associated positive weights $\bm{w}$ have been obtained,~the next task is to construct the differentiation matrix $D = P^{-1} Q$ that satisfies Definition \ref{def:2_1}. Here, $P$ is simply the diagonal matrix with diagonal entries $\bm{w}$. To construct $Q$ such that it is almost skew-symmetric, we follow~\cite{Glaubitz2023multi-d,Glaubitz20242nd,glaubitz2023summation,Glaubitz2024}. From $Q+Q^T = B$, we write %$Q$ as
$
Q = \frac{1}{2} B + S, \text{ where } S = \frac{1}{2} (Q-Q^T)\label{Qdef}
$ is the skew-symmetric component. We consider the consistency condition $P^{-1}Q F = F_x$ where $F$ is the Vandermonde-like matrix with columns consisting of the basis functions evaluated at points $\bm{x}$ and $F_x$ is its corresponding derivative. Next, using $\eqref{Qdef}$, we rewrite the consistency condition  as
\[
SF = PF_x - \frac{1}{2} BF.
\]
The only unknown here is $S$ and it can be obtained by solving the constrained least squares problem $F^TS = R^T$, where $R =  PF_x - \frac{1}{2}BF$ and $S$ is constrained to be skew-symmetric. For more details on the construction procedure and resulting operators, see Glaubitz \textit{et al.}~\cite{Glaubitz2023multi-d,Glaubitz20242nd,glaubitz2023summation,Glaubitz2024}.

\subsection{Numerical considerations}\label{subsec:numerical}

We employ the Chebfun package in our implementation of the algorithm above, which provides a powerful and flexible framework for numerical computations involving general function spaces~\cite{chebfun}. Its design---representing functions as piecewise polynomial interpolants---makes it especially well suited to problems such as ours, which requires high-accuracy numerical orthogonalisation,  differentiation, and integration.

One key advantage of using Chebfun is its ability to {adaptively} determine piecewise discretisations of given functions. As noted in~\cite{ma1996generalized}, this is crucial when working with general function spaces, especially those with localized features, discontinuities in derivatives, or rapidly changing behaviour. Rather than fixing a discretisation \emph{a priori}, Chebfun refines the representation until a desired accuracy is achieved, thereby enabling robust numerical integration and differentiation.

Another major benefit is that, given a basis for $\mathcal{F}$, Chebfun allows us to {automatically} compute a basis for the space $(\mathcal{FF})'$. Specifically, Chebfun's singular value decomposition {\tt svd} implementation enables the extraction of an orthogonal basis of $(\mathcal{FF})'$. % This greatly simplifies the workflow and reduces the likelihood of algebraic or transcription errors in manually computing these products and derivatives.
Working with an orthogonal basis helps mitigate ill-conditioning when constructing and inverting the Hermite--Vandermonde matrices, and is recommended in~\cite{ma1996generalized}. %Orthogonalisation is especially helpful when the functions in $\mathcal{G}$ are nearly linearly dependent or poorly scaled, as it improves the stability of both interpolation and least-squares procedures. 
A similar procedure is employed in~\cite{yarvin1998}.

Finally, Chebfun can {adaptively} compute accurate numerical approximations of the integrals and derivatives over $\mathcal{G}$, as  needed in the quasi-Newton iterations for node updates. % or for determining quadrature weights. % Since Chebfun constructs function representations to machine precision by default, its quadrature rules (e.g., Clenshaw--Curtis) provide highly accurate approximations to integrals, even for non-smooth or oscillatory functions. This is particularly important when computing expressions like $\int_a^b \sigma_i(x)\, dx$ and $\int_a^b \eta_i(x)\, dx$, which directly affect convergence and accuracy of the GGQ/GGLQ node construction.
Taken together, these features make Chebfun an ideal tool for implementing and extending the GGQ and GGLQ algorithms in the context of general function spaces and FSBP operator construction. A MATLAB code to generate these nodes, weights, and the corresponding SBP differentiation matrix is available at~\cite{code}.

\subsection{Examples}
\label{subsection:examples}

\mbox{}
\newline
\textbf{Example I:} In~\cite{glaubitz2023summation}, Glaubitz {\em et al.} present an example for the exponential approximation space 
\begin{equation}\label{eqn:example1}
\mathcal{F} = 
%\mathcal{E}_2 = 
\text{span}\{1, x, e^x\},
\end{equation}
on $[0, 1]$, resulting in 
\[
    \mathcal{G} = (\mathcal{F}\mathcal{F})'
    %(\mathcal{E}_2\mathcal{E}_2)' 
    = \text{span}\{1, x, e^x, xe^x, e^{2x}\}.
\]
Using the least-squares approach they obtain the 5-point equally-spaced quadrature formula
$$
\begin{array}{c|ccccccccc}
	\bm{x} & 0 & 0.25 & 0.5 & 0.75 & 1\\
    \bm{w} & 0.0759763872 & 0.3620888878 & 0.1244746618 & 0.3608784643 & 0.0765815990
\end{array},
$$
giving rise to the differentiation matrix 
$$
\setlength{\arraycolsep}{3.25pt} % default is 6pt
\!\!\!\! D = \left[\begin{array}{rrrrrrrr}
\\[-.5em]
	-6.528983553&   8.521455370& -0.479376527 & -2.489678844 & 0.976583554\\
	-1.808328128&            0&  0.890963460 &  1.451385592 & -0.534020924\\
	0.294930875& -2.583092176&             0 & 2.569553027 & -0.28139173\\
	0.526565695& -1.446533768& -0.883330737 &           0  &  1.803298810\\
	-0.984362811&  2.536533493 & 0.461013497 & -8.594176227 &   6.580992049\\[.5em]
    \end{array}\right].$$
If we instead use the procedure described in Section~\ref{subsubsec:modified}, we are able to compute a 4-point Lobatto-type grid with positive weights that is $\mathcal{G}$-exact, namely%\footnote{MATLAB code to generate these nodes, weights, and the corresponding SBP differentiation matrix are available at~\tocite{}.}
$$
\begin{array}{c|cccccccc}%
	\bm{x} & 0 & 0.2956452974 & 0.7423537958  & 1\\
    \bm{w} &   0.0914828668 & 0.4341375639&	0.3987262252 & 0.0756533441
\end{array},
$$
giving rise to the differentiation matrix 
$$
D = 
\left[\begin{array}{rrrrrr}\\[-.5em]
-5.465504277&  7.365125959& -2.802901094&   0.903279412\\
-1.552003083&            0&  2.142484824& -0.590481741\\
0.643091453& -2.332761387&            0&   1.689669934\\
-1.092279411&  3.388486098& -8.905299859&   6.609093173\\[.5em]
\end{array}\right].
$$

%\begin{remark}
In this example, $\cal{G}$ has only 5 elements. The derivations in the previous sections were based on the fact that $\mathcal{G}$ had an even number of basis functions.\footnote{Otherwise, in the language of Tchebyshev sets, this would result in a principal representation of {\em half-integer index}, giving rise to a Radau-type rule~\cite[Corollary 3.1]{karlin196}, typically undesirable for SBP constructions.} Therefore, when $\dim\mathcal{G}$ is odd, we augment $\mathcal{G}$ with an additional linearly independent function; in this case $g_6(x) = x^2$. Since there are many other possible such functions we could have used to augment $\mathcal{G}$, each resulting in a different set of nodes and weights, the optimal nodes for the original function space are not unique. 

Admittedly, the difference here between a five-point equally-spaced rule and a four-point optimal rule is not that impressive. However, for larger function spaces the gains will be more significant.

\begin{remark}
In~\cite{glaubitz2024optimization} a $4\times4$ operator for the function space~(\ref{eqn:example1}) based on a uniform grid is presented, namely\footnote{In~\cite{glaubitz2024optimization} these values are given to only 4 decimal places. Here we have calculated the $\bm{w}$ and $D$ to higher precision using our own implementation of the optimisation procedure described in that same manuscript.}
$$
\begin{array}{c|ccccccccc}
	\bm{x} & 0 & 0.3333333333 & 0.6666666666 & 1\\
    \bm{w} & 0.1413079925 &     0.3300510668  &    0.4159300208 &    0.1126686223	\end{array},
$$
and
$$
D = \left[\begin{array}{rrrrr}
\\[-.5em]
  -3.538370274  & 3.606205594 & 0.4025982272  &-0.4704099266\\
   -1.543960084  &                 0 &  1.631806089 & -0.08783997038\\
  -0.1367786512 & -1.294879700 &                  0 &  1.431657018\\
   0.5899839814 &  0.2573181010 & -5.285137255 &  4.437792793\\[.5em]
    \end{array}\right].$$
One can verify that properties {\em (ii)} and {\em (iii)} from Definition~\ref{def:2_1} are satisfied by these nodes and weights, however {\em (i)} is far from satisfied, giving a maximum error of around $1.4\times10^{-4}$ when differentiating the basis functions. Whether this operator can be viewed as an FSBP operator for the function space~(\ref{eqn:example1}) is therefore questionable.
\end{remark}

\textbf{Example II:} Consider the function space
\begin{equation}\label{eqn:example2}
    \mathcal{F} = \{J_\nu(x), \nu = 0, 1, \ldots, 9\},
\end{equation}
on the interval $[0, 25]$. Here $J_\nu(x)$ are Bessel functions, which arise often in the context of radially-symmetric waves and flows, and so an FSBP operator built on such a basis may be beneficial in these settings~\cite{korenev2002bessel}.  Determining $\mathcal{G}$ explicitly here is complicated, so we use the automatic Chebfun SVD-based procedure described above, with which one finds that $\mathcal{G}$ is spanned by a space of dimension 48. Using the modified Ma {\em et al}.\ routine, we find the positive 25-point GGLQ rule shown in Table~\ref{table:example2}.

\begin{table}[ht]
\centering
\begin{tabular}{c c | c c  c c}
$\bm{x}$ & $\bm{w}$ &
$\bm{x}$ & $\bm{w}$ \\\hline
0            & 0.04674109541 & 12.14501738  & 1.586926028 \\
0.1708613339 & 0.2857413813  & 13.78368019  & 1.634993556 \\
0.5661606569 & 0.5014293903  & 15.30246428  & 1.373778538 \\
1.166443811  & 0.6963842628  & 16.63807436  & 1.392918441 \\
1.954074975  & 0.8750817726  & 18.11924712  & 1.503655935 \\
2.905578711  & 1.022716109   & 19.49873798  & 1.226087151 \\
3.993567909  & 1.150946736   & 20.68293671  & 1.214279320 \\
5.198004650  & 1.252002006   & 21.87902576  & 1.094192400 \\
6.495279513  & 1.347451304   & 22.83987635  & 0.8957497727 \\
7.897105635  & 1.446405823   & 23.70798388  & 0.7768641260 \\
9.334639871  & 1.393772578   & 24.35339492  & 0.5561938824 \\
10.680527    & 1.350167740   & 24.80987539  & 0.3170472565 \\[-.6em]
    {\small \vdots}     &     {\small \vdots}          & 25.0         & 0.05847348825 \\
\end{tabular}
\caption{The 25-point quadrature nodes $\bm{x}$ and associated weights $\bm{w}$ corresponding to the function space~(\ref{eqn:example2}). For the same function space, an equally-spaced grid requires over 100 quadrature points to construct a valid FSBP operator. }\label{table:example2}
\end{table}

Conversely, using an equally-spaced grid, the optimisation procedure of~\cite{glaubitz2024optimization} required 100 points to generate an FSBP operator for the same function space.

\section{Applications to IBVPs}\label{sec:applics}
To illustrate the efficiency of our development, we will consider two examples of IBVPs,~both in a multi-domain setting.

\subsection{Advection equation}
\label{subsection:contprob_1}
Consider the constant coefficient problem 
\begin{align}
u_t + au_x &= 0,& \quad 0 \leq x \leq 1, \quad t > 0, \nonumber \\
u  &= g_L(t),& \quad  x = 0, \quad t \geq 0, \label{eq:prob1_eq1} \\
u &= f(x),&\quad 0 \leq x \leq 1, \quad t = 0, \nonumber
\end{align}
where $a> 0$.~The boundary data $g_L$ and initial data $f$ are compatible such that a smooth solution exists.
\par 
We apply the energy method \cite{Gustafsson95, Kreiss1970277} by multiplying the equation in $\eqref{eq:prob1_eq1}$ with the solution $u$ and then integrating over the domain, to get the energy rate
\begin{eqnarray}\label{eq:prob1_rate}
\frac{d}{dt}||u||^2 = au^2 \big|^0_1  = ag^2_L - au^2(1,t),
\end{eqnarray}
which leads to a bound in terms of data by time-integration.~Here,~$||u ||^2 = \int^1_0 u^2 dx$ is the $L_2$ norm.
\par In this example and the one to follow we utilise a multi-element construction, where we subdivide the domain into multiple sub-domains. On each of these sub-domains we utilise a fixed number of nodes so that for an increasing number of elements we have an increasing number of nodes. For the sake of clarity, we illustrate how this is done by partitioning the domain $x\in [0,1]$ into two sub-domains, i.e. $x_L =[0,x_i]$ and $x_R\in [x_i,1]$ where $x_i$ is an intermediate point.~These sub-domains are discretized with $N+1$ and $M+1$ points,~respectively.~Further,~we define ${\bf u}$ and ${\bf v}$ as the numerical approximations of $u$ and $v$ on the left and right sub-domains.~As presented in \cite{Carpenter1999341}, the semi-discrete approximation of (\ref{eq:prob1_eq1}) on the left and right sub-domains is
\begin{eqnarray}\label{eq:prob1_eq3a}
    {\bf u}_t + a D_L {\bf u} &=& \sigma^LP_L^{-1}{\bf e}^L_N(u_N-v_0) + \tau^L P_L^{-1}{\bf e}^L_0(u_0-g_L),\\ \label{eq:prob1_eq3b}
{\bf v}_t + aD_R {\bf v} &=& \sigma^R P_R^{-1}{\bf e}^R_0\left(v_0-u_N\right)
\end{eqnarray}
where the SBP difference operators, $D_L, D_R$, are defined in Definition \ref{sec:def-SBP}. The interface conditions and left boundary condition are implemented weakly by the SAT terms on the right hand side. We apply the energy method by multiplying (\ref{eq:prob1_eq3a}) and (\ref{eq:prob1_eq3b}) with ${\bf u}^T P_L$ and ${\bf v}^T P_R$ respectively, adding the transposes, using the relations in Definition \ref{def:2_1} and summing up. That leads to the following discrete energy rate
\begin{align}
\frac{d}{dt}\left( ||{\bf u}||^2_{P_L} + ||{\bf v}||^2_{P_R}\right)    =  ag^2_L - av_N^2-a(u_0-g_L)^2-a(v_0-u_N)^2, \label{energy_rate2}
\end{align}
%\leq  ag^2_L +(-a+2\sigma^L)(u_N-v_0)^2,
where we have used the penalty coefficient $\tau_L = -a $.
%such that the left outer boundary terms are bounded by data.
Further,~we set $\sigma^R = \sigma^L - a$ so that we have a conservative scheme and a decreasing energy rate if $\sigma^L \leq \frac{a}{2}$. We choose $\sigma^L = 0$ in equation (\ref{energy_rate2}) which leads to an energy rate similar to (\ref{eq:prob1_rate}) but with damping terms.~Lastly,~the temporal integration leads to an energy estimate and hence stability.~Here,~$||\textbf{u}||^2_P = \textbf{u}^T P\textbf{u}$ is the discrete $L_2$-equivalent norm. 

We will compute and compare approximate solutions to equation (\ref{eq:prob1_eq1}) for the following constructions of the derivative operator:
\begin{itemize}
\item $4^{th}$-order polynomial-based SBP operator.
\item Trigonometric-based SBP operator with equispaced nodes using the Glaubitz et al.'s \cite{glaubitz2023summation} approach.
\item Trigonometric-based SBP operator with optimised nodes using the approach described in Section \ref{subsubsec:modified}.
\end{itemize}
For the trigonometric function space,~we consider the bases $\{1,\sin(k\pi x),\cos(k\pi x)\}$ which gives $2k + 1$ bases when we choose $k\in [1,2]$. We assess the accuracy of the approach via the method of manufactured solution (MMS) \cite{MMS1} with the 
%following analytical 
solution
\begin{align}
v(x) = e^{\sin(2\pi x)},
\end{align}
which satisfies (\ref{eq:prob1_eq1}) with periodic data.~The exact and approximated solutions are shown in Figure \ref{fig1_triq} (left).~Further,~we present the solution error norm ($||u - v||^2_P$) in Figure \ref{fig1_triq} (right) for both approaches for an increasing number of nodes, where we subdivide the domain into multiple sub-domains. 
%On each of these sub-domains we have 5 equispaced nodes for the one approximation and 4 optimal nodes for the other
As shown in Figure \ref{fig1_triq}, and discussed above, the solution obtained with operators based on trigonometric bases and optimal nodes is significantly more accurate than the solution produced by the other operators.
%via the operators constructed with optimised trigonometric bases converges faster than the equispaced counterpart.}

\subsection{Advection-diffusion equation}
\label{subsection:contprob_2}
Consider the constant coefficient problem
\begin{align}\nonumber
u_t + au_x -\varepsilon u_{xx} &= 0,& \quad 0 \leq x \leq 1,& \quad t> 0, \\ \nonumber
au -\varepsilon u_x &= g_L& \quad x = 0, & \quad t \geq 0, \\ \label{prob2_eq1} 
\varepsilon u_x &= g_R& \quad x = 1, &\quad t \geq 0, \\ \nonumber
u &= f(x),&\quad  0 \leq x \leq 1,& \quad t = 0, 
\end{align}
where $a, \varepsilon > 0$ are the wave speed and diffusion constant,~respectively. At the left boundary we impose the Robin boundary condition,~$au-\varepsilon u_x= g_L$,~and at the right boundary a Neumann boundary condition,~$\varepsilon u_x = g_R$, where $g_L$ and $g_R$ denote boundary data and $f$ initial data. 
\par To solve (\ref{prob2_eq1}) and avoid using second derivative operators (for which we presently have no theory) we employ the Local Discontinuous Garlerkin (LDG) method \cite{JCAM2011}. Using $\varphi = u_x$ we rewrite the governing equation in $\eqref{prob2_eq1}$ in first order form as
\begin{align}
u_t +au_x - \varepsilon \varphi_x &= 0,  \label{eq8}   \\ 
-\varepsilon u_x + \varepsilon \varphi &= 0. \nonumber 
\end{align}
We apply the energy method to $\eqref{eq8}$ with boundary conditions from (\ref{prob2_eq1}) to obtain
\begin{align}\label{eq:rate2}
%\frac{d}{dt}||u||^2+2\varepsilon ||\varphi||^2 = \frac{1}{a}\left[g_L^2 -(\varepsilon\varphi)^2 \right]\big|_{x=0}-\frac{1}{a}\left[(au-\varepsilon \varphi)^2 -g_R^2 \right]\big|_{x=1} \leq \frac{g_L^2}{a} + \frac{g_R^2}{a},
\frac{d}{dt}||u||^2+2\varepsilon ||\varphi||^2 = \frac{1}{a}\left[g_L^2 + g_R^2  \right]-\frac{1}{a}\left[(\varepsilon\varphi)^2\big|_{x=0} + (au-\varepsilon \varphi)^2\big|_{x=1} \right] ,
\end{align}
which bounds the energy rate.
% \cite{JCAM2011}
\par Similar to the previous example in Section \ref{subsection:contprob_1}, we partition the domain $x\in [0,1]$ into two sub-domains.~The semi-discrete approximation of the system \eqref{eq8} on the left domain leads to
\begin{align}\nonumber
\textbf{u}_t + a D_L \textbf{u} - \varepsilon D_L \bm{\varphi} &=  \sigma_1^LP_L^{-1}{\bf e}^L_N(u_N-v_0) +  \sigma_2^LP_L^{-1}{\bf e}^L_N \left(\varphi_N - \eta_0\right) \\ &+ \tau_L P_L^{-1}{\bf e}^L_0 (au_0-\varepsilon \varphi_0 - g_L),  \label{eq:prob2_eq3a}  \\
  - \varepsilon D_L {\boldsymbol u} + \varepsilon {\boldsymbol \varphi} &= \sigma_3^LP_L^{-1}{\bf e}^L_N(u_N-v_0) +  \sigma_4^LP_L^{-1}{\bf e}^L_N \left(\varphi_N - \eta_0\right). \nonumber
\end{align}
Meanwhile,~on the right domain, we have
\begin{align}\nonumber
\textbf{v}_t + aD_R \textbf{v} -\varepsilon D_R \bm{\eta} &= \sigma_1^R P_R^{-1}{\bf e}^R_0\left(v_0-u_N\right) + \sigma_2^R P_R^{-1}{\bf e}^R_0\left(\eta_0 - \varphi_N\right) \\
 &+ \tau_R P_R^{-1}{\bf e}^R_M(\varepsilon \eta_M-g_R), \label{eq:prob2_eq3b} \\
  -\varepsilon D_R \textbf{v} + \varepsilon \bm{\eta} &= \sigma_3^R P_R^{-1}{\bf e}^R_0\left(v_0-u_N\right) + \sigma_4^R P_R^{-1}{\bf e}^R_0\left(\eta_0 - \varphi_N\right). \nonumber 
\end{align}
The energy method applied to (\ref{eq:prob2_eq3a}) and (\ref{eq:prob2_eq3b}) gives
\begin{align}\nonumber
\frac{d}{dt}\left(||{\boldsymbol u}||^2+||{\boldsymbol v}||^2 \right) + 2\varepsilon ||{\boldsymbol \varphi}||^2+2\varepsilon ||{\boldsymbol \eta} ||^2 =& \frac{1}{a} \left[g_L^2 +g_R^2 \right]-\frac{1}{a}\left(au_0 -g_L \right)^2  \label{energy_rateLDG2} \\ 
-& \frac{1}{a}\left( av_M - g_R \right)^2 - a(u_N - v_0)^2 \\
-&\varepsilon (\varphi_N-\eta_0)^2,\nonumber
\end{align}
with inserted values of penalty coefficients $\tau_L = \tau_R =  -1$.
%, such that the left and right boundary terms are bounded by data. 
We also used the following stability conditions from \cite{JCAM2011}
\begin{align}\label{eq:conditions}
\sigma_1^R = -a+\sigma_1^L, \quad \sigma_2^R =\varepsilon +  \sigma_2^L ,\quad \sigma_2^L = -\varepsilon -\sigma_3^L, \quad \sigma_3^R = \varepsilon + \sigma_3^L , \quad \sigma_4^R=\sigma_4^L.
\end{align}
and choose $\sigma_1^L = 0$ and $\sigma_4^L = -\frac{\varepsilon}{2}$. Therefore, $\eqref{energy_rateLDG2}$ with $\eqref{eq:conditions}$ leads to an energy rate
which is similar to $\eqref{eq:rate2}$ but with additional damping terms. %We use $\sigma_1^L \leq \frac{a}{2}$ and $\sigma_4^L \leq 0$ to ensure that it is decreasing.

\par Next,~we compare approximate solutions to equation (\ref{prob2_eq1}) using (\ref{eq:prob2_eq3a})--(\ref{eq:prob2_eq3b}) with derivative operators based on exponential basis functions with equispaced and optimised nodes, respectively.~We also implement polynomial bases with equispaced nodes and Gauss Lobatto nodes as baseline solutions for comparison. The MMS is utilised to assess the accuracy of the approach 
%taken 
with the 
%following manufactured 
solution
\begin{align}
v(x,t) = \left( \frac{e^{ax/\varepsilon}-1}{e^{a/\varepsilon}-1} \right)e^{0.1t}.
\end{align}
%and obtain the forcing function,~boundary and initial data by inserting this solution into $\eqref{eq8}$.~
To capture the exponential behaviour of the solution,~we use the basis $\{1,x,e^{ax/\varepsilon}\}$. 
\par ~In Figure \ref{fig1.1} the accuracy of the solutions obtained 
for the case $\varepsilon = 0.1$ is shown. We observe that the introduction of an FSBP operator including an exponential function improves 
the convergence rates. In addition, we see that the use of optimal nodes 
improves the convergence orders even more for both polynomial based SBP operators and exponentially based FSBP operators. The best result by a wide margin is  obtained for the FSBP operator with an exponential basis and optimal nodes.

%\begin{figure}[ht!]
%     \centering
%      \includegraphics[width=.5\textwidth]{u 005.png}\hspace{-1.25cm}
 %     \includegraphics[width=.5\textwidth]{norms epsilion 005.png}
 %  \caption{A boundary layer-like problem \textcolor{red}{with $\varepsilon = 0.05$} approximated using exponential and polynomial basis FSBP operators}. Left: Exact solution compared with the SBP solution. Right: Convergence as the number of nodes is increased. \label{fig1.1}
%\end{figure}

\begin{figure}[ht!]
     \centering
      \includegraphics[width=.5\textwidth]{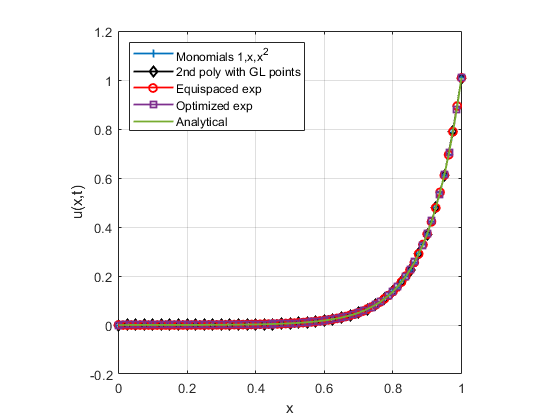}\hspace{-1.25cm}
      \includegraphics[width=.5\textwidth]{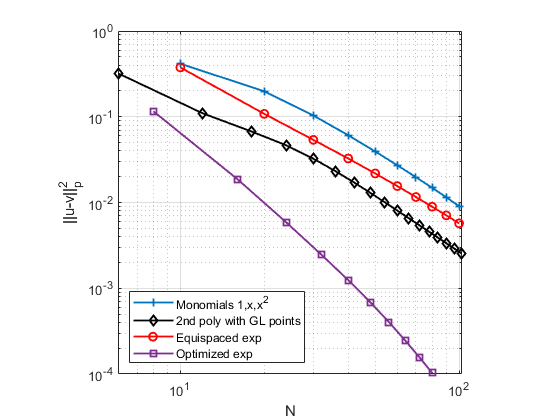}
   \caption{A boundary layer-like problem with $\varepsilon = 0.1$ approximated using exponential and polynomial basis FSBP operators. For both cases we use equidistant and optimal nodes. Left: Exact solution compared with the SBP solution. Right: Convergence as the number of nodes is increased.}\label{fig1.1}
\end{figure}
\FloatBarrier
\noindent 

\section{Conclusion}\label{sec:conc}

We have studied the construction of diagonal-norm FSBP operators for general (non-polynomial) approximation spaces by exploiting the equivalence between such operators and positive quadrature rules exact for $\mathcal{G}=(\mathcal{F}\mathcal{F})'$.
When $\mathcal{G}$ forms a Tchebyshev system, the theory of generalised Gauss quadrature implies existence, uniqueness, and positivity of the corresponding GGQ and GGLQ rules, and therefore yields \emph{optimal} node sets in the sense of minimal operator dimension.

To build a closed diagonal-norm FSBP operator for a prescribed space $\mathcal{F}$, one should compute a positive GGLQ rule for $\mathcal{G}=(\mathcal{F}\mathcal{F})'$ and then construct $D=P^{-1}Q$ from these nodes and weights.
This directly transfers the efficiency of Gauss--Lobatto nodes in polynomial SBP to the setting of general function spaces.

Our numerical examples demonstrate that the resulting optimised nodes reduce the number of required grid points substantially.
For instance, for the Bessel function space in Example~II we obtain a positive 25-point GGLQ-based construction, whereas an equally spaced grid required 100 points in the optimisation-based approach of~\cite{glaubitz2024optimization}.

The main practical limitations are (i) reliability in characterising (and numerically working with) the structure of $\mathcal{G}$ and determining when it forms a Tchebyshev system, and (ii) reliability in computing GGLQ rules. 
Promising directions include: robust detection of Tchebyshev-system structure for $\mathcal{G}=(\mathcal{F}\mathcal{F})'$ in finite precision arithmetic;  ``nearly'' GGQ/GGLQ constructions when $\mathcal{G}$ is not a strict Tchebyshev system, together with accuracy/stability guarantees;  and incorporating the ideas of Huybrechs~\cite{huybrechs2022} for more robust computation of GGLQs;
as well as extensions of the present approach to higher-order operators (e.g., second derivatives), bypassing the necessity of LDG formulations.

In summary, generalised Gauss and Gauss--Lobatto quadrature provide a direct route to node selection for FSBP operators, yielding minimal-size stable discretisations whenever the underlying function-space assumptions hold. We also demonstrated the efficiency and accuracy of our improvement when solving initial boundary value problems.

\section{Acknowledgements}\label{sec:ack}
Generative AI (ChatGPT) was used to assist in editing the authors’ text for spelling, grammar, and structure in this work. AI was not used in the generation of any code, numerical results, or figures.

\bibliographystyle{abbrv}
\bibliography{mybib}%

\end{document}